\documentclass[a4paper,11pt,oneside]{article}
\usepackage[utf8]{inputenc}
\usepackage[T1]{fontenc}
\usepackage[english]{babel}
\usepackage{amsthm}
\usepackage{amssymb}
\usepackage{amsmath}
\usepackage{mathrsfs}
\usepackage{graphicx}
\usepackage{xcolor}
\usepackage{eurosym}
\usepackage{dsfont}
\usepackage{pifont}
\usepackage{fancybox}
\usepackage{multicol}
\usepackage{enumitem}
\usepackage{url}
\usepackage{bbm}
\usepackage[all]{xy}
\usepackage{authblk}
\usepackage{mathtools}
\usepackage{mathabx}
\reversemarginpar
\numberwithin{equation}{section}
\usepackage[top=3.2cm, bottom=3.3cm, left=2.5cm , right=2.5cm]{geometry}
\usepackage[colorlinks=true,linkcolor=black,citecolor=black,urlcolor=black,hyperfootnotes=false]{hyperref}


\usepackage[titletoc,toc,page,title,header]{appendix}

\theoremstyle{definition} \newtheorem{Def}{Definition}
\theoremstyle{definition}\newtheorem{Rem}{Remark}
\theoremstyle{definition}
\theoremstyle{plain}\newtheorem{theorem}{Theorem}
\theoremstyle{plain}
\theoremstyle{plain}\newtheorem{Lem}{Lemma}
\theoremstyle{plain}
\theoremstyle{plain} \newtheorem{Assu}{Assumption}

\newcommand{\R}{\mathbb{R}}

\newcommand{\Z}{\mathbb{Z}}

\newcommand{\Prob}{\mathbb{P}}
\newcommand{\E}[1]{\mathbb{E}\left[#1\right]}

\newcommand{\abs}[1]{\left\lvert#1\right\rvert}

\newcommand{\eps}{\varepsilon}

\DeclareMathOperator{\Var}{Var}

\allowdisplaybreaks

\renewcommand{\1}{\mathbf{1}}

\setcounter{tocdepth}{3}
\setcounter{secnumdepth}{3}

\begin{document}
 
\title{Multi-Scale CUSUM Tests\\
for Time Dependent Spherical Random
Fields}
\author{Alessia Caponera\footnote{Department of Economics and Finance, LUISS Guido Carli. Email: acaponera@luiss.it}, Domenico Marinucci\footnote{Department of Mathematics, University of Rome Tor Vergata. Email: marinucc@mat.uniroma2.it}, Anna Vidotto\footnote{Department of Mathematics and Applications, University of Naples Federico II. Email: anna.vidotto@unina.it}}
\maketitle

\begin{abstract}
This paper investigates the asymptotic behavior of structural break tests in the harmonic domain for time dependent spherical random fields. In particular, we prove a functional central limit theorem result for the fluctuations over time of the sample spherical harmonic coefficients, under the null of isotropy and stationarity; furthermore, we prove consistency of the corresponding CUSUM test, under a broad range of alternatives, including deterministic trend, abrupt change, and a nontrivial power alternative. Our results are then applied to NCEP data on global temperature: our estimates suggest that Climate Change does not simply affect global average temperatures, but also the nature of spatial fluctuations at different scales.
\end{abstract}
\textbf{AMS Classification:} Primary: 62M40; Secondary: 62M30, 60G60\\
\textbf{Keywords and Phrases:}  Space-time processes, spherical Fourier analysis, non-stationarity, climate change

 \tableofcontents

\section{Introduction}

The analysis of time dependent spherical random fields is the natural setting for a number of different areas of applications; some relevant examples include Cosmology, Geophysics and Atmospheric/Climate Sciences, see for instance \cite{CaponeraPanaretos,CaponeraA0S21,Clarke,Porcu18,Porcu} and the references therein for a small sample of recent contributions (more generally, spherical data have been recently considered in different frameworks by \cite{Cheng2019,DiMarzio1,DiMarzio2,Fan,Fan2}, among others). In these areas, it is often a natural question to probe whether structural breaks have occurred over time; the most immediate example of such changes is obviously represented by shifts in the global mean (namely, averaged over space), which would correspond to Global Warming when studying temperature data. Such shifts can be investigated by means of a number of traditional statistical tools, such as the celebrated CUSUM test for structural breaks.

Our purpose in this paper is to exploit harmonic/spectral methods to
investigate multi-scale structural breaks, i.e., modifications of the
statistical model which may go beyond a simple global mean shift. A more rigorous
and complete description of our environment will be given in the sections to
follow; we believe, however, that it is useful to introduce from the start
our motivations by means of a very preliminary analysis on a temperature
data set.

In order to do so, let us first recall that a time dependent spherical
random field is simply a collection of random variables $\{T(x,t), \ (x, t) \in \mathbb{S}^{2} \times \mathbb{Z}\};$ under some regularity
conditions (to be given below) the following spectral
representation holds
\begin{equation*}
T(x,t)=\sum_{\ell =0}^{\infty }T_{\ell }(x,t)=\sum_{\ell = 0}^\infty \sum_{m=-\ell}^{\ell} \beta_{\ell
m}(t)Y_{\ell m}(x)\text{ ,}
\end{equation*}%
where $\left\{ Y_{\ell m}, \ m=-\ell ,\dots,\ell, \ \ell =0,1,2,\dots\right\} $
denotes the set of fully-normalized real spherical harmonics, an orthonormal
basis for the space of square-integrable real-valued functions on the sphere,
see \cite{MaPeCUP}. Heuristically, each term $T_{\ell }(\cdot, t)$ can be viewed as the Fourier
component of the spherical field at time $t$ when projected onto basis elements characterized by
fluctuations of typical scale $\theta _{\ell }\simeq \pi /\ell$. In
particular, the Fourier coefficient corresponding to $\ell =0$ is simply the sample mean at time $t$
computed on the whole sphere, namely
$$
\beta_{00}(t)=\frac1{\sqrt{4\pi}}\int_{\mathbb{S}^2}T(x,t)\,dx;
$$
as such, it is for instance the typical statistic of
interest when discussing global increments of the Earth temperature.
As already mentioned, our aim in this paper is to explore possible changes occurring at different scales; for instance, it could be the case that even when the global mean is unaltered, or in addition to changes in the latter,
other modifications occur in the fluctuations of the variable of interest.

This point is illustrated by a simple, very preliminary data analysis that
we report here just as a motivating example; much greater details will be
given below in Section \ref{sec::ncep}. More precisely, we consider \textit{global (land and ocean) surface temperature anomalies}; the dataset is built starting from the NCEP/NCAR monthly averages of the surface air temperature (in degrees Celsius) from 1948 to 2020, over a global grid with $2.5^{\circ}$ spacing for latitude and longitude, see \cite{ncep}. Following the World Meteorological Organization policy, temperature anomalies are obtained by subtracting the long-term monthly means relative to the 1981-2010 base period. They are then averaged over months to switch from a monthly scale to an annual scale (we refer also to the recent survey \cite{Stein2020} for a general discussion on the role of Statistics for Climate Data; see also \cite{RuizMedina2024} and the references therein for some very recent contributions).

Figure \ref{fig:beta} reports the temporal evolution of $\beta_{00}(t)$, which is nothing else than the sample average of Earth temperature at time $t$; unsurprisingly, this global mean grows steadily from 1949 to 2020.

\begin{figure}[!ht]
    \centering
    \includegraphics[scale=0.7]{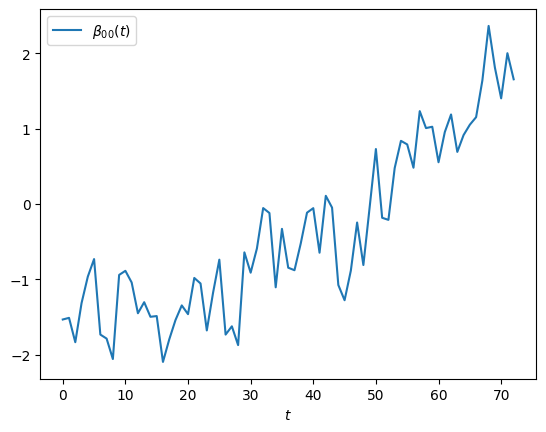}
    \caption{Temporal evolution of $\beta_{0 0}(t)$ from 1948 to 2020;  $t =0$ corresponds to 1948, while $t=72$ corresponds to 2020.}
    \label{fig:beta}
\end{figure}

Let us now consider a plot of the fluctuations of the temperature, with respect to its historical mean (given in Figure \ref{fig:map}). A careful inspection of the colors suggests that not only
some regions exhibit greater changes than others (i.e., the Poles), but also that a small periodic pattern seems to appear, with greater fluctuations that are at an approximated distance of 40-60 degrees across different
latitudes.

\begin{figure}[!ht]
    \centering
    \includegraphics[scale=0.4]{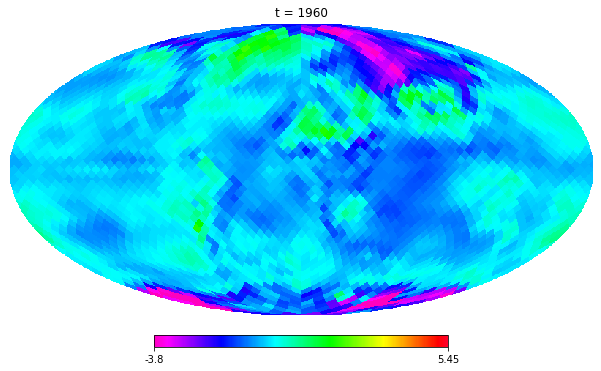}
    \includegraphics[scale=0.4]{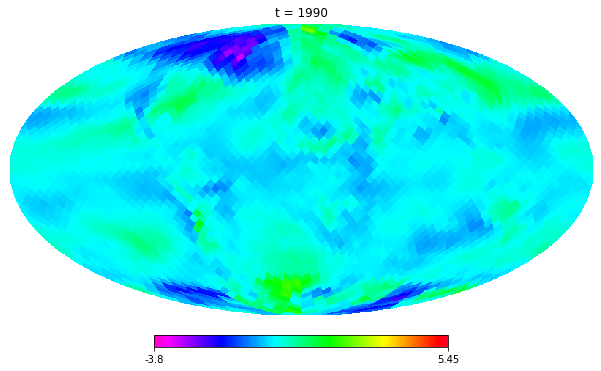}
    \includegraphics[scale=0.4]{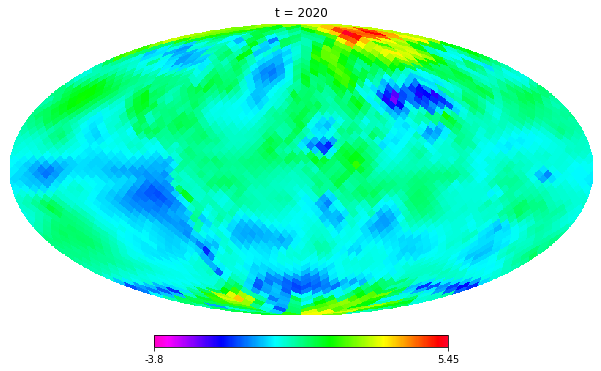}
    \caption{Global surface temperature anomalies (resolution: $3072$ pixels) in years 1960, 1990, and 2020.}
    \label{fig:map}
\end{figure}

This point can be made clearer by introducing the spherical coordinates $\theta \in \lbrack 0,\pi ]$ (co-latitude) and $\phi \in \lbrack 0,2\pi )$ (longitude); in this coordinates, the spectral representation
becomes
\begin{equation*}
T(\theta ,\phi ;t)=\sum_{\ell =0}^{\infty }T_{\ell }(\theta ,\phi
;t)=\sum_{\ell=0}^\infty \sum_{m=-\ell}^\ell \beta_{\ell m}(t)Y_{\ell m}(\theta ,\phi ),
\end{equation*}%
with the analytic expressions%
\begin{eqnarray*}
    Y_{\ell m}(\theta ,\phi ) &=& \begin{dcases}
\sqrt{\frac{2\ell +1}{2\pi }}\sqrt{\frac{(\ell
-|m|)!}{(\ell +|m|)!}}P_{\ell |m|}(\cos \theta )\sin (|m|\phi ) & \text{for } m<0 \\
\sqrt{\frac{2\ell +1}{4\pi }} P_{\ell 0}(\cos \theta ) & \text{for } m=0\\
\sqrt{\frac{2\ell +1}{2\pi }}\sqrt{\frac{(\ell
-m)!}{(\ell +m)!}}P_{\ell m}(\cos \theta )\cos (m\phi ) & \text{for } m > 0 
    \end{dcases},\\
P_{\ell m}(u) &=& \frac{1}{2^\ell \ell !}(1-u^{2})^{m/2}\frac{d^{\ell +m}}{dt^{\ell +m}}(u^{2}-1)^{\ell },  \quad u \in [-1,1], \ m\ge 0.
\end{eqnarray*}%
The $P_{\ell m}$'s are the well-known associated Legendre
functions, whereas the $\beta_{\ell m}(t)$'s can be obtained by means of the Fourier transform
\begin{equation} \label{alm}
\int_{-\pi}^\pi \int_0^\pi T_{\ell }(\theta ,\phi ;t)Y_{\ell m}(\theta ,\phi )\sin\theta d\theta d\phi .
\end{equation}

Note also that, for every $\ell =0,1,2,\dots,$ we have that%
\begin{eqnarray*}
\int_{-\pi }^{\pi }T_{\ell }(\theta ,\phi ;t)d\phi 
&=&\sqrt{2\ell +1} \sqrt{\pi} \beta_{\ell 0}(t)P_{\ell 0}(\cos \theta )\text{ .}
\end{eqnarray*}%
In other words, up to a deterministic function, each random
process $\left\{ \beta_{\ell 0}(t), \ t \in \mathbb{Z}\right\}$ captures the temporal evolution of the sample spatial mean for the spectral component corresponding to the
multipole $\ell$, computed at any given latitude. To explore their
behavior, we plot in Figure \ref{fig:beta2} the observed evolution of $\beta_{\ell
0}(t)$ over the time span 1948-2020, for $\ell =2,4,6,8,$. It is remarkable that these
coefficients appear indeed to grow over time, suggesting that further changes in the Earth temperature pattern may have occurred, in addition
to the growth of the global average.

\begin{figure}[!ht]
    \centering
    \includegraphics[scale=0.5]{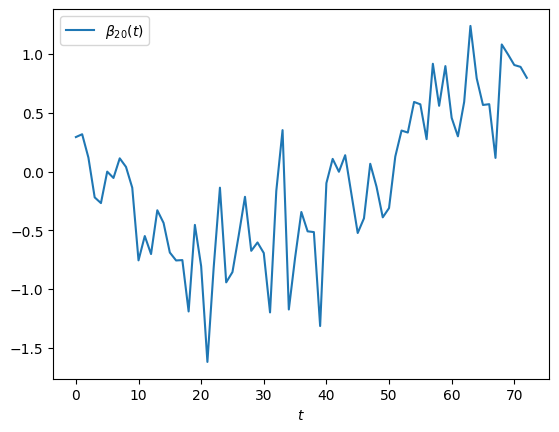}
    \includegraphics[scale=0.5]{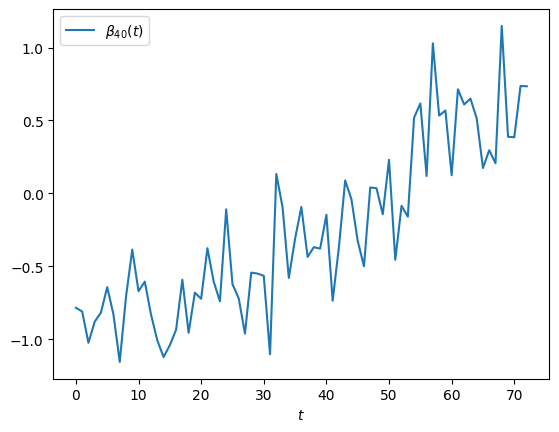}
    \includegraphics[scale=0.5]{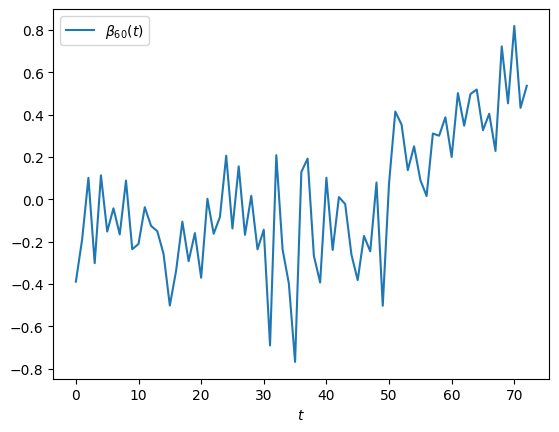}
    \includegraphics[scale=0.5]{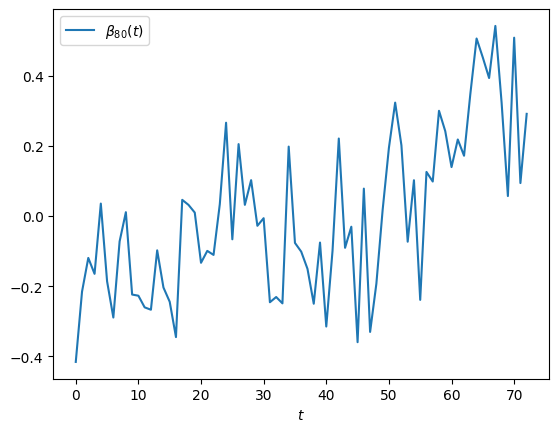}
    \caption{Temporal evolution of $\{\beta_{\ell 0}(t)$, for $\ell=2,4,6,8$, from 1948 to 2020; $t =0$ corresponds to 1948, while $t=72$ corresponds to 2020.}
    \label{fig:beta2}
\end{figure}

This preliminary data exploration suggests that even if we were to subtract the global temperature mean $\beta_{00}(t)$ from each time observation, in order to make it constantly equal to zero, the effects of Climate Change would still be visible in the form of structural changes on different scales. Our purpose in this paper is to devise a class of tests in order to investigate these phenomena more rigorously, which is what we start to do from the next section.

\section{Model}

In this section, we introduce our model of interest. In short, under the
null we assume to deal with a collection of time dependent spherical random fields with a stationary (in time) but anisotropic (in space) mean function; on the other hand, we are going
to probe alternatives that replace the stationary mean by introducing time-varying factors which
may vary across the different multipole components. Moreover, the test statistic will be defined on a fixed window of consecutive frequencies (the so-called \emph{multipoles} in the spherical setting), that is, on $\{\underline{\ell},\underline{\ell}+1,\dots,\overline{\ell}\}$, with $\underline{\ell}, \overline{\ell}$ being two fixed non-negative integer values such that $\underline{\ell}\le\overline{\ell}$ and we define
$$
\triangle{}_{\underline{\ell},\overline{\ell}}:=\{(\ell,m):\ell=\underline{\ell},\dots,\overline{\ell}, \, m=-\ell,\dots,\ell\}.
$$
From now on, we assume that all the constants involved in the $o$- and $O$-notations  and in all upper and lower bounds depend on such multipole window $\{\underline{\ell},\dots,\overline{\ell}\}$.

\subsection{The null hypothesis of stationarity}
Let $\{Z(x,t), \ (x,t) \in \mathbb{S}^2\times \mathbb{Z}\}$ denote a centered strictly isotropic (over space) and strictly stationary (over time) sphere-cross-time random field, that is, a collection of random variables with finite variance and such that%
\begin{eqnarray*}
Z(\cdot,\cdot)&\stackrel{d}{=}&Z(g\,\cdot,\tau+\cdot) \qquad \tau \in \Z\,, \quad g\in SO(3)\,,\\
\mathbb{E}[Z(x,t)] &=& 0\,, \\
\mathbb{E}[Z(x_{1},t_{1})Z(x_{2},t_{2})] &=:& \Gamma (\langle x_{1},x_{2}\rangle,t_{2}-t_{1}),
\end{eqnarray*}
with $\langle \cdot, \cdot\rangle$ being the standard inner product in $\mathbb{R}^3$ and $SO(3)$ is the group of all rotations about the origin of $\R^3$ under the operation of composition.
As anticipated above, it is well-known (see, e.g., \cite{MaPeCUP}) that the space of square-integrable functions on the sphere admits as an orthonormal basis the fully-normalized spherical harmonics $\left\{ Y_{\ell m}, \ m=-\ell, \dots, \ell, \ \ell=0,1,2,\dots\right\}$, which are
eigenfunctions of the spherical Laplacian and hence they satisfy the
Helmholtz equation%
\begin{eqnarray*}
&&\Delta _{\mathbb{S}^{2}}Y_{\ell m} = -\lambda _{\ell }Y_{\ell m}\text{ , }%
\qquad \lambda _{\ell }=\ell (\ell +1)\text{ , } \qquad \ell =1,2,\dots, \\
&&\Delta _{\mathbb{S}^{2}} =\frac{1}{\sin \theta }\frac{\partial }{\partial
\theta }\left (\sin \theta \frac{\partial }{\partial \theta }\right)+\frac{1}{\sin
^{2}\theta }\frac{\partial ^{2}}{\partial \phi ^{2}}\text{ .}
\end{eqnarray*}%
The Spectral Representation Theorem for isotropic random fields ensures that, in $L^{2}(\Omega \times \mathbb{S}^{2})$
\begin{equation*}
Z(\theta ,\phi ;t) = \sum_{\ell =0}^{\infty }\sum_{m=-\ell
}^{\ell }a_{\ell m}(t)Y_{\ell m}(\theta ,\phi ).
\end{equation*}%
The idea is now to consider a process $\{T(x,t), \ (x,t) \in \mathbb{S}^2 \times \mathbb{Z}\}$ that satisfies
\begin{equation}\label{H0-eq}
T(x, t) - \mu(x) = Z(x, t), 
\end{equation}
for some deterministic $\mu(\cdot) \in L^2(\mathbb{S}^2)$. The anisotropic (but stationary) mean function then has the $L^{2}$-expansion%
\begin{eqnarray*}
\mu (\theta ,\phi ) &=&\sum_{\ell =0}^{\infty }\sum_{m=-\ell }^{\ell }\mu
_{\ell m}Y_{\ell m}(\theta ,\phi )\text{ ,} \\
\mu _{\ell m}  &=&\int_{-\pi}^\pi \int_0^\pi \mu (\theta ,\phi )Y_{\ell
m}(\theta ,\phi )\sin \theta d\theta d\phi \text{ .}
\end{eqnarray*}%
Hence, we can introduce our model under the null hypothesis in the next assumption.
\begin{Assu}[Null hypothesis $H_0$]\label{Assumption 0}
Under the null hypothesis $H_0$, the field satisfies the equation in \eqref{H0-eq}, and hence it has stationary mean function $\mathbb{E}[T(x,t)]=\mu(x)$ and  spectral representation
\begin{equation*}
T(x,t)=\sum_{\ell=0}^\infty \sum_{m=-\ell}^\ell  a_{\ell m}(t)Y_{\ell m}(x)+\sum_{\ell=0}^\infty \sum_{m=-\ell}^\ell \mu
_{\ell m}Y_{\ell m}(x)\text{ .}
\end{equation*}
\end{Assu}

Under the assumptions on $\{Z(x,t), \ (x,t) \in \mathbb{S}^2 \times \mathbb{Z}\}$ we also have that the array of spherical harmonic coefficients $\{a_{\ell m}(t), \ m=-\ell, \dots, \ell,\ \ell=0,1,2,\dots,\ t \in \mathbb{Z}\}$ is formed by zero-mean uncorrelated (over $\ell$ and $m$) and stationary (over $t$) processes, with covariances
\begin{align*}
C_{\ell}(\tau) &:= \mathbb{E}[a_{\ell m}(t+\tau) a_{\ell m}(t)] \text{ ,} \qquad t,\tau \in \mathbb{Z}\,.
\end{align*}
Note that $\{C_\ell(0),\ \ell=0,1,2,\dots\}$ corresponds to the angular power spectrum of the spherical field at a
given time point, for which we will simply write $\{C_\ell,\ \ell=0,1,2,\dots\}$; for simplicity and without loss of generality we assume that $C_\ell(0)>0$ for all $\ell\in \{\underline{\ell},\dots,\overline{\ell}\}$.
We also impose some regularity conditions on the behavior of higher-order cumulants, as detailed in the following subsection.

\subsection{Cumulants and higher-order conditions}

Let us first recall that the joint cumulant of a random vector $(X_1,\dots,X_p)$ is just the Fourier coefficient in the expansion of the logarithm of the joint characteristic function (the expansion is implicitly assumed to exist), see \cite{PeccatiTaqqu}. Of course for Gaussian random variables cumulants of order strictly larger than $2$ are exactly equal to zero; for random spherical harmonics coefficients of isotropic spherical processes, a number of further characterizations for their joint cumulants are discussed in \cite[Chapter 6]{MaPeCUP}. 

In this paper, we are allowing for general non-Gaussian behavior, imposing only the following very broad condition on the joint cumulants:
\begin{Assu}\label{assumption:cum}
For all integers $p \ge 2$ and $(\ell_j,m_j)\in \triangle_{\underline{\ell},\overline{\ell}}$, $j=1,\dots,p$,
\begin{equation}\label{eq:cum}
\sum_{\tau_1, \dots, \tau_{p-1} =-\infty}^{+\infty} \abs{ \operatorname{cum}\big(a_{\ell_1,m_1}(t+\tau_1), \dots, a_{\ell_{p-1},m_{p-1}}(t+\tau_{p-1}), a_{\ell_p,m_p}(t)\big)} < + \infty.
\end{equation}
\end{Assu}

\begin{Rem}
Summability of the cumulants is a standard condition in asymptotic theory of stationary processes (see e.g. \cite{PanaretosTavakoli} and the references therein). For stationary time series it is fulfilled for instance by ARMA processes with i.i.d. innovations and finite moments of all orders. Likewise, by a careful exploitation of the diagram formula, see \cite{MaPeCUP}, subsection 4.3.1, it can be shown that it is fulfilled by polynomial transforms of Gaussian processes with summable covariances.
\end{Rem}

In view of the  strict stationarity condition that we imposed above, the expression \eqref{eq:cum} is strictly invariant with respect to $t\in\Z$. In particular, condition \eqref{eq:cum} (for $p=2$) implies the continuity of the spectral densities of the spherical harmonic coefficients; indeed, for any multipole $\ell$, the associated spectral density function is well defined as
\begin{equation}\label{eq:spec-dens}
f_{\ell}(\lambda):=\frac{1}{2\pi}\sum_{\tau\in \mathbb{Z}}C_{\ell}(\tau)e^{-i\lambda \tau} \text { , }  \qquad \lambda \in [-\pi,\pi] \text{ .}
\end{equation}

\begin{Assu}\label{Assumption 1}
For all integers $\ell\in \{\underline{\ell},\dots,\overline{\ell}\}$, the spectral density at the origin is strictly positive, that is, $f_{\ell}(0)>0$. 
\end{Assu}
This is a standard identifiability condition that is fulfilled, for instance, by all stationary invertible ARMA processes.

\subsection{The test statistic}

We start by defining some sample quantities that are needed in order to introduce our CUSUM test statistics, which can also be viewed as a form of Fourier domain testing for stationarity for functional valued time series, see \cite{Aue,Hormann}.

\begin{Def} [Sample harmonic coefficients] For $t=1,2,\dots,N$, the sample spherical harmonic coefficients are defined
by%
\begin{equation*}
\beta _{\ell m}(t):=\int_{\mathbb{S}^{2}}T(x,t)Y_{\ell m}(x)dx\text{ ,} \qquad (\ell,m) \in \triangle_{\underline{\ell},\overline{\ell}}.
\end{equation*}
\end{Def}

Note that we are implicitly assuming that the $\beta _{\ell m}(t)$'s can be estimated exactly from the observations, i.e., that the integrals defined in (\ref{alm}) can be computed without approximations. 
That is, we are adopting the assumption that the field is fully
observed over the sphere so that it is possible to compute its Fourier
coefficients, which is standard in the functional data analysis context.

\begin{Def}[Sample harmonic averages] The sample harmonic averages are defined as%
\begin{equation*}
\widehat{\mu }_{\ell m}:=\frac{1}{N}\sum_{t=1}^{N}\beta _{\ell m}(t)\text{ ,} \qquad (\ell,m) \in \triangle_{\underline{\ell},\overline{\ell}}.
\end{equation*}
\end{Def}

\begin{Def}[Sample variance] The sample variance is defined as 
\begin{flalign*}
&\widehat\sigma^2=\sum_{\ell=\underline{\ell}}^{\overline{\ell}}(2\ell+1) 2\pi \,\widehat f_\ell(0)\,, \quad \text{which estimates} \quad \sigma^2:=\sum_{\ell=\underline{\ell}}^{\overline{\ell}} (2\ell+1) 2\pi \, f_\ell(0)\,,
\end{flalign*} 
where
\begin{flalign*}
&2\pi\widehat f_\ell(0)= \sum_{\tau=-q_N}^{q_N}\left(1-\frac{\abs{\tau}}{q_N+1}\right)\widehat{C}_\ell(\tau)=\widehat{C}_\ell(0)+2\sum_{\tau=1}^{q_N}\left(1-\frac{\tau}{q_N+1}\right)\widehat{C}_\ell(\tau)\,,\\
&\widehat{C}_\ell(\tau)=\frac1{2\ell+1}\frac1{N-\abs{\tau}} \sum_{m=-\ell}^\ell \sum_{t=1}^{N-\abs{\tau}}(\beta_{\ell m}(t)-\widehat{\mu}_{\ell m})(\beta_{\ell m}(t+\abs{\tau})-\widehat{\mu}_{\ell m})\,.
\end{flalign*}
The bandwidth parameter $q_N$ is assumed to grow slower than $\sqrt{N}$; formally, 
$$
q_N\to \infty \quad \text{as}\quad  N\to \infty \quad \text{and}\quad  q_N=o\left(\sqrt{N}\right).
$$
\end{Def}

\begin{Def}[The test statistic] The test statistic on which we shall focus is
\begin{equation*}
A_N(s):=\begin{dcases}\frac{1}{\sqrt{N}}\frac{1}{\widehat\sigma}%
\sum_{t=1}^{[Ns]}\sum_{\ell =\underline{\ell}}^{\overline{\ell}}%
\sum_{m=-\ell }^{\ell }(\beta _{\ell m}(t)-\widehat{\mu }_{\ell m}),& \qquad s \in [1/N,1]\,,\\
0&\qquad s \in [0,1/N)
\end{dcases}
\end{equation*}%
where $[\cdot]$ denotes the floor function. 
\end{Def}
In the following, when writing $s\in[0,1]$ we always implicitly make the previous distinction.

\subsection{The alternative hypothesis}\label{sec:alternative}

We shall here introduce our model under the alternative; in this case, we allow the deterministic mean function to vary over time, and hence we consider a process $\{T(x,t), \ (x,t) \in \mathbb{S}^2 \times \mathbb{Z}\}$ that satisfies
\begin{equation}\label{H1-eq}
T(x, t) - \mu(x,t) = Z(x, t), 
\end{equation}
for some $\mu(\cdot, t) \in L^2(\mathbb{S}^2)$, $t \in \mathbb{Z}$. 

\begin{Assu}[Alternative hypothesis $H_1$] \label{Alternative}
Under the alternative hypothesis $H_1$, the field satisfies the equation in \eqref{H1-eq}, so that
\begin{equation*}
T(x,t) = \sum_{\ell=0}^\infty \sum_{m=-\ell}^\ell a_{\ell m}(t)Y_{\ell m}(x)+\sum_{\ell=0}^\infty \sum_{m=-\ell}^\ell  \mu_{\ell m}(t)Y_{\ell m}(x) \text{ ,}
\end{equation*}
with 
\begin{equation*}
    \mu_{\ell m}(t)=\mu_{\ell m;0}+N^{\alpha_\ell}  \,g_{\ell m}\left(\frac{t}{N}\right) , \qquad \alpha_\ell \in \R\,,
\end{equation*}
where $g_{\ell m}$ are bounded piecewise continuous functions on $[0,1]$. In addition, denoting with $$\overline{\alpha}:=\max_{\ell=\underline{\ell},\dots,\overline{\ell}} \alpha_\ell    \,,$$ 
we assume that
$$
\overline{\alpha}\ge-1/2
, \qquad 
\sum_{\ell\in \overline{\mathcal{I}}}\sum_{m=-\ell}^\ell \Var(g_{\ell m}(U)\ne 0\,, 
$$
$\overline{\mathcal{I}}$ being the set of indexes $\{ \ell\in \{\underline{\ell},\dots,\overline{\ell}\}: \alpha_{\ell}= \overline{\alpha} \}$ and $U\sim \operatorname{Unif}(0,1)$, and there exists $s\in(0,1)$ such that 
\begin{equation}\label{cond:int}
\sum_{\ell\in \overline{\mathcal{I}}}\sum_{m=-\ell}^\ell  \left (\int_0^s g_{\ell m}(t) dt - s\int_0^1 g _{\ell m}(u)du\right )\ne0\,.
\end{equation}
\end{Assu}
This model covers a number of nonparametric alternatives and allows to distinguish two possible regimes: \emph{(I)} the \emph{local alternative} regime, which occurs when $\overline{\alpha} = -1/2$, and  \emph{(II)} the \emph{globally consistent} regime when $\overline{\alpha} > -1/2$.
The first case will lead to a nontrivial power for the test, resulting from a mean shift of the 
asymptotic distribution of $A_N(s)$ compared to the one obtained under the null hypothesis $H_0$ (for a similar result in the linear regression setting, see for instance \cite[Theorem 2]{KP:90}); as usual, the power of the test will depend on the size of the shift. 
In the second case we are going to obtain a consistency result for the test; specifically, the convergence to unity of the  test power, with rates that depend on $\overline{\alpha}$ and $q_N$: see Theorem \ref{thm-H1} together with the detailed discussion in Section \ref{sec:3scenarios}.

\section{Main Results}

In this section, we state our main results, i.e., the asymptotic behavior of the test statistic under the null and alternative hypothesis. In the sequel, Assumptions \ref{assumption:cum} and \ref{Assumption 1} will be always taken to hold.
To characterize the limiting distribution of our statistic of interest under the null, we recall a well-known definition.

\begin{Def} A \emph{Brownian bridge} is a zero-mean Gaussian process  $W:[0,1] \rightarrow \mathbb{R}$ with covariance function
\[
\mathbb{E}[W(s)W(s')]=(s\wedge s')-ss'.
\]
\end{Def}


\begin{theorem}\label{thm-H0}
Under the null hypothesis $H_0$, as $N\rightarrow \infty $, 
\begin{equation*}
A_N(s)\Longrightarrow W(s)\text{ ,}
\end{equation*}
where $\Longrightarrow$ denotes as usual weak convergence in the Skorohod space $D[0,1]$.
\end{theorem}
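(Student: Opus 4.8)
The plan is to exploit the joint Gaussianity of the whole coefficient array and to run the classical two-step programme—convergence of the finite-dimensional distributions plus tightness—after two preliminary reductions. First I would pass to a clean centered Gaussian functional. Under $H_0$ one has $\beta_{\ell m}(t)=a_{\ell m}(t)+\mu_{\ell m}$ and $\widehat{\mu}_{\ell m}=\mu_{\ell m}+\frac{1}{N}\sum_{u=1}^N a_{\ell m}(u)$, so the deterministic mean cancels exactly and $\beta_{\ell m}(t)-\widehat{\mu}_{\ell m}=a_{\ell m}(t)-\bar a_{\ell m}$, with $\bar a_{\ell m}:=\frac{1}{N}\sum_{u=1}^N a_{\ell m}(u)$. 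Next I would replace the random normalisation $\overline{C}_\ell$ by the deterministic $C_\ell$: since $\overline{C}_\ell$ is, up to an $O(1/N)$ centering correction, an average of $N(2\ell+1)$ squared jointly Gaussian terms, a $\chi^2$-concentration estimate combined with a union bound over $\ell\le L(N)$—exactly in the spirit of the two pixelization lemmas already established—yields $\sup_{\ell\le L(N)}\lvert \overline{C}_\ell/C_\ell-1\rvert\to 0$. A Slutsky-type argument then lets me work throughout with the centered Gaussian process $\widetilde{A}_{L,N}(r,s)$ obtained by substituting $\sqrt{C_\ell}$ for $\sqrt{\overline{C}_\ell}$.

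For the finite-dimensional distributions, Gaussianity means it suffices to prove convergence of the covariance kernel. Using independence over $(\ell,m)$ and stationarity in $t$, the quantity $\mathbb{E}[\widetilde{A}_{L,N}(r,s)\widetilde{A}_{L,N}(r',s')]$ collapses to a single sum over $\ell\le [L(r\wedge r')]$ of $\frac{1}{C_\ell}\cdot\frac{1}{N}\sum_{t_1\le[Ns]}\sum_{t_2\le[Ns']}\mathbb{E}[(a_{\ell 0}(t_1)-\bar a_{\ell 0})(a_{\ell 0}(t_2)-\bar a_{\ell 0})]$, the cross-$m$ terms absorbing the factor $1/(2\ell+1)$. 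Assumption \ref{Assumption 1} rules out long memory and over-differencing, so the $C_\ell(\tau)$ are summable with controlled long-run variance, and the four pieces produced by the centering assemble, for each $\ell$, into the Brownian-bridge kernel $(s\wedge s')-ss'$ in the $s$-variable, scaled by the normalised long-run variance $2\pi f_\ell(0)/C_\ell$. Summing the independent multipole contributions with the $1/L$ weight then generates the Brownian-motion kernel $r\wedge r'$ in the $r$-variable, the normalised long-run variances being controlled uniformly through Assumption \ref{Assumption 1}. This identifies the limiting kernel as the Brownian pillowcase $(r\wedge r')((s\wedge s')-ss')$, and Gaussianity promotes covariance convergence to full convergence of the finite-dimensional distributions.

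For tightness in $D([0,1]^2)$ I would verify a Bickel--Wichura type moment criterion on the rectangular increments of $\widetilde{A}_{L,N}$. Here Gaussianity is again decisive: for any block $B$ the $2p$-th moment of the rectangular increment $\Delta_B\widetilde{A}_{L,N}$ equals a universal constant times $(\mathbb{E}[(\Delta_B\widetilde{A}_{L,N})^2])^p$, so it is enough to bound the increment variance by the block volume and then take $p$ large. Repeating the covariance computation above for increments gives $\mathbb{E}[(\Delta_B\widetilde{A}_{L,N})^2]\lesssim (r'-r)(s'-s)$, uniformly in $N$ and $L$, with constants supplied by the uniform spectral bounds of Assumption \ref{Assumption 1}; feeding this into the criterion produces tightness, and together with the finite-dimensional convergence this yields $A_{L,N}\Longrightarrow W$ in $D([0,1]^2)$.

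The main obstacle I expect is the interplay of the two asymptotics. Every estimate—the $\overline{C}_\ell\to C_\ell$ replacement and the increment-variance bounds alike—must be made uniform over the growing range $\ell\le L(N)$ while $N\to\infty$, and the tightness argument is genuinely two-parameter, so the lattice jumps induced by $[Lr]$ and $[Ns]$ have to be shown asymptotically negligible against the continuous pillowcase limit. Most delicate is the correct bookkeeping of the normalised long-run variances $2\pi f_\ell(0)/C_\ell$ through the $\ell$-average in the finite-dimensional step, so that the constant emerging in the limiting kernel is exactly the one appearing in the Brownian pillowcase.
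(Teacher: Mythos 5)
Your overall architecture --- cancel the deterministic mean, replace $\overline{C}_\ell$ by $C_\ell$, then prove convergence of the Gaussian finite-dimensional distributions via the covariance kernel and tightness via a Bickel--Wichura moment bound --- is the same as the paper's, and your finite-dimensional step matches the paper's computation. The genuine gap is in the studentization step. You propose to show $\sup_{\ell\le L(N)}\lvert\overline{C}_\ell/C_\ell-1\rvert\to 0$ by concentration plus a union bound and then invoke ``a Slutsky-type argument''. But the replacement error is not a single multiplicative perturbation: writing $Y_\ell(s)$ for the normalized partial-sum process of the $\ell$-th multipole, one has
\[
A_{L,N}(r,s)-\widetilde A_{L,N}(r,s)=\frac{1}{\sqrt L}\sum_{\ell=1}^{[Lr]}\sqrt{\frac{C_\ell}{\overline{C}_\ell}}\Bigl(1-\sqrt{\overline{C}_\ell/C_\ell}\Bigr)\,Y_\ell(s),
\]
a sum of $L\to\infty$ terms, each multiplied by an $O_{\mathbb{P}}(1)$ process. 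A uniform bound $\sup_\ell\lvert\overline{C}_\ell/C_\ell-1\rvert\le\varepsilon_N$ controls this sum only by $O_{\mathbb{P}}(\varepsilon_N\sqrt L)$, and with $\varepsilon_N\asymp\sqrt{\log L/N}$ from a union bound this does not vanish for general $L(N)$; your argument would silently impose a growth restriction of the type $L\log L=o(N)$ that the theorem does not contain. The paper instead uses the per-multipole $L^2$ rate $\lVert 1-\sqrt{\overline{C}_\ell/C_\ell}\rVert_{L^2(\Omega)}=O((\ell N)^{-1/2})$ (the extra $\ell^{-1/2}$ coming from averaging over the $2\ell+1$ independent values of $m$), so that $L^{-1/2}\sum_{\ell\le L}(\ell N)^{-1/2}=O(N^{-1/2})$ regardless of how fast $L$ grows. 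You also never confront the random denominator: since $\overline{C}_\ell$ sits under a square root in the denominator, one needs uniform control of negative moments of $\overline{C}_\ell/C_\ell$, and this is precisely where the lower bound in Assumption \ref{Assumption 1} enters the paper's proof (via $\lambda_{\min}$ of the covariance matrix, a $\chi^2$ minorization, and the resulting uniform bound $\mathbb{E}[(C_\ell/\overline{C}_\ell)^4]\le c'$); your sketch uses only the upper half of that assumption.

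On tightness your route differs from the paper's and is legitimate: the paper verifies Wichura's product condition $\mathbb{E}[\lvert A(B_1)\rvert^2\lvert A(B_2)\rvert^2]\le C(\nu(B_1\cup B_2))^{\beta}$ by an explicit Wick computation of the mixed fourth moment over neighboring blocks, whereas you bound the single-increment variance by the block area and upgrade by Gaussian hypercontractivity and Cauchy--Schwarz; this is shorter and yields the same exponent $\beta=2$. Two caveats apply equally to your version: the increment-variance bound needs absolute summability of the normalized autocovariances $\widetilde C_\ell(\tau)$, uniformly in $\ell$, which must be extracted from Assumption \ref{Assumption 1}; and the bound $([Lr_2]-[Lr_1])([Ns_2]-[Ns_1])/(LN)\lesssim(r_2-r_1)(s_2-s_1)$ fails for blocks thinner than the lattice spacing, so the criterion has to be applied to lattice-adapted rectangles --- an issue you correctly flag but do not resolve.
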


 The previous result ensures the weak convergence under the null of our test statistic; threshold values for the excursion of Kolmogorov-Smirnov or Cram\'er-Von Mises statistics can hence be derived by analytic computations or simulations. Our next result 
 studies the asymptotic behavior of the test statistic under $H_1$ in the two regimes anticipated in Section \ref{sec:alternative}. 
 

\begin{theorem}\label{thm-H1}
Under the alternative hypothesis $H_1$, as  $N\rightarrow \infty $, we have 
\begin{itemize}
\item[(I)] if  $\overline{\alpha}=-1/2$, 
$$
A_N(s)\implies W(s)+\frac{1}{\sigma}
\sum_{\ell\in \overline{\mathcal{I}}}\sum_{m=-\ell}^\ell  \left (\int_0^s g_{\ell m}(t) dt - s\int_0^1 g _{\ell m}(u)du\right ),
$$
where $\overline{\mathcal I}=\{\ell \in \{\underline{\ell},\dots, \overline{\ell}\}: \, \alpha_\ell=\overline{\alpha}\}$;
\item[(II)] if  $\overline{\alpha}>-1/2$, there exists $K>0$ such that, taking
$$
r_N(\overline{\alpha})=\begin{dcases}
     N^{\overline{\alpha}+1/2}& \text{when} \quad \overline{\alpha}\in (-1/2,0) \quad \text{and} \quad q_NN^{2\overline{\alpha}} \to c\ge 0\\
     \sqrt{ N/q_N}&\text{when} \quad \overline{\alpha}\in (-1/2,0) \quad \text{and} \quad q_NN^{2\overline{\alpha}} \to +\infty\\
 \sqrt{N/q_N}&\text{when} \quad \overline{\alpha}\ge 0
\end{dcases}\,,
$$
we have that
$$\Prob\left(\sup_{s\in[0,1]}\abs{A_N(s)}>K \cdot r_N(\overline{\alpha}) \right)\to 1\,.$$
\end{itemize}
\end{theorem}

The proofs of Theorems \ref{thm-H0} and \ref{thm-H1} are given in Appendices \ref{proof:thm-H0} and \ref{proof:thm-H1}, respectively. 

\begin{Rem}\label{cor-H1}
For case \emph{(II)} in Theorem \ref{thm-H1}, as  $N\rightarrow \infty $, we can take more explicitly
\begin{itemize}
    \item $q_N=[N^\beta]$ with $0<\beta<1/2$, and
$$
r_N(\overline{\alpha})=\begin{dcases}
     N^{\overline{\alpha}+1/2}& \text{if} \quad \overline{\alpha}\in (-1/2, -\beta/2] \\
     N^{(1-\beta)/2}&\text{if} \quad \overline{\alpha} > -\beta/2
\end{dcases}\,;
$$
\item $q_N=[\log N]$, and
$$
r_N(\overline{\alpha})=\begin{dcases}
     N^{\overline{\alpha}+1/2}& \text{if} \quad \overline{\alpha}\in (-1/2, 0) \\
     \sqrt{ N/\log N}&\text{if} \quad \overline{\alpha} \ge 0
\end{dcases}\,.
$$
\end{itemize}
Note that when $q_N=[N^\beta]$ with $0<\beta<1/2$, we have a phase transition for the rate of the power in $-\beta/2$, while when $q_N$ grows logarithmically in $N$, the phase transition happens in $0$.
In the literature on CUSUM tests, a standard choice for $q_N$ is $[N^{1/3}]$ -- see, for instance, \cite{RGLA:11} and the references therein.
\end{Rem}

\subsection{Three possible scenarios}\label{sec:3scenarios}

In the following, we first analyze two possible specific models for the globally consistent regime, that is case \emph{(II)} of Theorem \ref{thm-H1},  and then we study the power of the test in the case $\overline{\alpha}=1/2$, that is the locally alternative regime, i.e., case \emph{(I)} of Theorem \ref{thm-H1}.

We start  by first recalling that, under Assumption \ref{Alternative},
$$
\mu_{\ell m}(t)=\mu_{\ell m;0}+N^{\alpha_\ell}  \,g_{\ell m}\left(\frac{t}{N}\right).
$$

\paragraph{Deterministic trend.} 
In the first scenario we consider a growing algebraic trend (to the leading order) which can be different from multipole to multipole and can be assumed arbitrarily small:
$$
\alpha_\ell>0 \quad \forall \ell \in \{\underline{\ell},\dots, \overline{\ell}\}\quad \text{and} \quad   g_{\ell m}(u)=\mu_{\ell m;1} u^{\alpha_\ell}\,  .
$$
This model can be even more generilzed by adding further power terms or oscillating components as a remainder, e.g.,
$$
\mu_{\ell m}(t)=\mu_{\ell m;0} + \mu_{\ell m;1} t^{\alpha_\ell} + \mu_{\ell m;2}(t) \, , \qquad \text{with } \lim_{t \rightarrow \infty}\frac{\mu_{\ell m;2}(t)}{t^{\alpha_{\ell}-\epsilon}}=0\, , \text{ for some }\epsilon > 0 ;
$$
these conditions are for instance satisfied when $\mu_{\ell m;2}(t) = t^{\alpha_\ell - \epsilon^*} \log t^k$, for some $\epsilon^*>0$ and some $k \in \mathbb{R}$.
In this scenario, we are in case \emph{(II)} of Theorem \ref{thm-H1} with $r_N(\overline{\alpha})=\sqrt{N/q_N}$ and 
\begin{align*}
K 
&=\frac{ s^\star \abs{(s^\star)^{\overline{\alpha}}-1}\sqrt{2\overline{\alpha}+1}}{2 \overline{\alpha}}\frac{\abs{\sum_{\ell\in \overline{\mathcal{I}}}\sum_{m=-\ell}^\ell  \mu_{\ell m;1}}}{ \left(\sum_{\ell\in \overline{\mathcal{I}}}\sum_{m=-\ell}^\ell \mu_{\ell m;1}^2 \right)^{1/2}}
\, ,\end{align*}
with $s^\star \in (0,1)$, see also see also Equation \eqref{eq:cs} in the proof of Theorem \ref{thm-H1}.
Here, the set $\overline{\mathcal{I}}$ can be viewed as the collection of multipoles which exhibit the strongest non-linear trend; we are imposing that these multipoles have coefficients which do not sum to zero, i.e., $\sum_{\ell\in \overline{\mathcal{I}}}\sum_{m=-\ell}^\ell  \mu_{\ell m;1}\ne0$, otherwise the condition is clearly meaningless.

\paragraph{Abrupt change.}
It is also natural to consider 
the following scenario, in which an abrupt change occurs at time location $[\eta N]$ with $\eta\in(0,1)$, that is,
$$
\alpha_\ell =0\quad  \forall \ell \in \{\underline{\ell},\dots, \overline{\ell}\}\quad \text{and} \quad g_{\ell m}(u)=\mu_{\ell m;1}  \1_{\{u\ge \eta\}}\,,
$$
see e.g.~\cite{RGLA:11}.
This alternative can be generalized to model the presence of multiple change points; however, we stick to the \emph{at most one change point} setting for simplicity. In this scenario, we are in case \emph{(II)} of Theorem \ref{thm-H1} with $r_N(\overline{\alpha})=\sqrt{N/q_N}$ and   
\begin{align*}
K&= \frac{\abs{(s^\star-\eta)\mathbf{1}_{\{s^\star\ge\eta\}}-s^\star(1-\eta)} \abs{\sum_{\ell\in \overline{\mathcal{I}}}\sum_{m=-\ell}^\ell \mu_{\ell m;1} }}{2 \sqrt{\eta(1-\eta)}\left(\sum_{\ell\in \overline{\mathcal{I}}}\sum_{m=-\ell}^\ell \mu_{\ell m;1}^2 \right)^{1/2}}\,.
\end{align*}
In particular, choosing $s^\star=\eta$,
\begin{align*}
K&= \frac{\sqrt{\eta(1-\eta)}}{2}\frac{\abs{\sum_{\ell\in \overline{\mathcal{I}}}\sum_{m=-\ell}^\ell \mu_{\ell m;1} }}{\left(\sum_{\ell\in \overline{\mathcal{I}}}\sum_{m=-\ell}^\ell \mu_{\ell m;1}^2 \right)^{1/2}}\,.
\end{align*}

\paragraph{Nontrivial power alternative.} Another interesting scenario is the one that yields nontrivial asymptotic power, which corresponds to case \emph{(I)} of Theorem \ref{thm-H1}. In this case, denoting
\begin{equation*}
\operatorname{shift}(s):=\frac{1}{\sigma}
\sum_{\ell\in \overline{\mathcal{I}}}\sum_{m=-\ell}^\ell  \left (\int_0^s g_{\ell m}(t) dt - s\int_0^1 g _{\ell m}(u)du\right)
\end{equation*}
and with $q_{1-\gamma}$ the quantile of order $1-\gamma$ of $\sup_s\abs{W(s)}$, the power of the test can be approximated, for $N$ sufficiently large, by
\begin{align}
\Prob\left(\sup_{s\in[0,1]}\abs{W(s)+\operatorname{shift}(s)}>q_{1-\gamma}\right)
&\ge\Prob\bigg(\abs{W(s^\star)+\operatorname{shift}(s^\star)}>q_{1-\gamma}\bigg)\label{ciao1}\\
&=1-\Prob\bigg(-q_{1-\gamma}-\operatorname{shift}(s^\star)<W(s^\star)<q_{1-\gamma}-\operatorname{shift}(s^\star)\bigg)\notag\\
&=1-\Prob\bigg(Z\in\left[\frac{-q_{1-\gamma}-\operatorname{shift}(s^\star)}{\sqrt{s^\star(1-s^\star)}},\frac{q_{1-\gamma}-\operatorname{shift}(s^\star)}{\sqrt{s^\star(1-s^\star)}}\right]\bigg),\label{ciao2}
\end{align}
for any choice of $s^\star\in (0,1)$. Note that the interval in \eqref{ciao2} is of width $2q_{1-\gamma}/\sqrt{s^\star(1-s^\star)}$, which does not depend on the shift. Heuristically, whenever $\abs{\operatorname{shift}(s^\star)}\gg 1$, this interval is placed on the tails of a standard Gaussian random variable $Z$, and hence the probability in \eqref{ciao1} is close to $1$.\\

In the sections to follow, we illustrate the validity of these results by means of an extensive Monte Carlo study and an application to the NCEP temperature data.


\section{Numerical Results and Applications to NCEP Data}\label{sec::ncep}

In this section, we present some numerical evidence on the performance of the test under the null; we then explore the power of the procedure against different alternatives, and we finally implement our proposed methods on real data on global surface temperature anomalies.

\subsection{Simulations under $H_0$}\label{sec::h0-simu}

We generate $N$ serially correlated Gaussian isotropic random fields on the sphere, through the simulations of their first $L$ harmonic coefficients
$$
\{a_{\ell m}(t), \ m=-\ell, \dots, \ell,\ \ell=0,\dots,L-1, \ t=1,\dots,N\}.
$$
Specifically, we simulate from the following stationary autoregressive model of order 1, independently over $\ell$ and $m$,
$$
a_{\ell m}(t) | a_{\ell m}(t-1) \sim \mathcal{N}\bigg(\! \phi_\ell a_{\ell m}(t-1), \, C_{\ell}(0)\! \bigg),  \qquad a_{\ell m}(t) \sim \mathcal{N}\left(0,\frac{C_{\ell}(0)}{1-\phi_\ell^2} \right),
$$
with autoregressive parameters
$$
\phi_0 = 0.4, \quad \phi_1 = -0.3, \quad \phi_2= 0.2, \quad \phi_3 = -0.1,\quad \phi_\ell = 10 \cdot \ell^{-3}, \ \text{for } \ell = 4, \dots, L-1.
$$
and noise variances
$$
C_0(0) = 2, \quad C_\ell(0) =
\frac{2}{\ell(\ell+1)}, \ \ell =1,\dots,L-1.
$$
Our assumptions are easily seen to be satisfied in this case (see, for instance, \cite{SPHARMA, CaponeraA0S21}). Note that the chosen model for $C_\ell(0)$ would not be acceptable for the full range of multipoles, because the resulting field would not have finite variance. However, this is not an issue here, as we are considering only a finite range of values. If necessary, we can implicitly assume that, for greater $\ell$ values, the sequence decreases at a faster rate.

We then consider the following model
$$
\beta_{\ell m}(t) =   a_{\ell m}  (t) + \mu_{\ell m},
$$
with $$\mu_{00} = 5, \quad \mu_{\ell 0} = -\frac{2}{\ell(\ell+1)} \ \text{for $\ell$ even}, \quad \mu_{\ell m} = 0 \ \text{otherwise}.$$ 

We fix $L=20, \ q_N=[N^{1/3}]$, and we compute the test statistic on a equally-spaced grid on the unit interval where each bin is of length $1/N$, for $N=100, 300, 500$. This is repeated, for each $N$, $B = 2000$ times in order to get the approximate distribution of the sup test (Kolmogorov-Smirnov) statistic $\sup_{s \in [0,1]}|A_N(s)|$, which is shown in Figure \ref{fig:hist_h0} for $N=100$. Table \ref{type1-2k} reports the approximate Type I error probability associated with the selected quantiles, which are
 $q_{0.90} = 1.22$, $q_{0.95} = 1.35$ and $q_{0.99} = 1.60$; it is immediately seen that these empirical error probabilities are extremely close to their nominal values, even for sample sizes of order $N=50$. Moreover, as $N$ increases, the Type one error probability gets even closer to the target values, as expected.


\begin{figure}[ht!]
\centering
\includegraphics[scale=0.5]{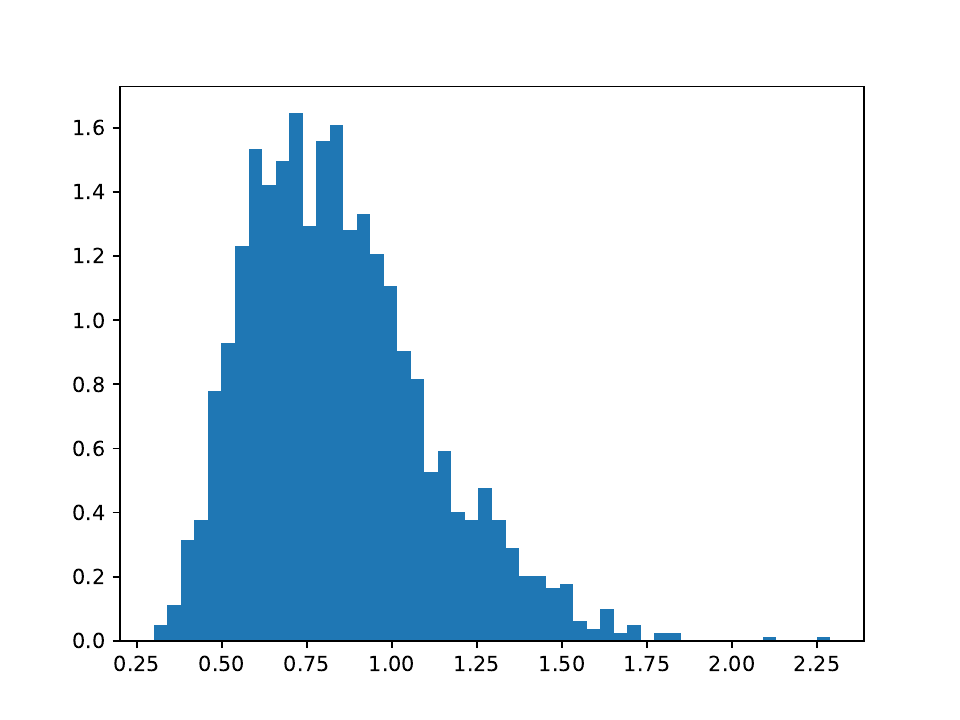}
    \caption{Histogram of the sup test statistic under $H_0$, computed for $L=20$, $N=100$, and based on $B=2000$ replicates.}
   \label{fig:hist_h0}
\end{figure}

\begin{table}[ht!]
    \centering
    \begin{tabular}{c|c|c|c}
     $N$ & $q_{0.90}$ & $q_{0.95}$ & $q_{0.99}$ \\
        \hline
        50 & 0.092& 0.052& 0.012\\
        100 & 0.102& 0.051& 0.0105\\
        500 & 0.0995& 0.05& 0.01
    \end{tabular}
    \caption{Type I error probability corresponding to the selected quantiles of order $1-\gamma = 0.90, 0.95, 0.99$, computed for $L=20$, and based on $B=2000$ replicates.}
    \label{type1-2k}
\end{table}

\subsection{Simulations under $H_1$}

Let us now focus on the alternative hypothesis; under $H_1$, the mean depends on time, i.e.,
$$
\beta_{\ell m}(t) =  a_{\ell m}  (t)+ \mu_{\ell m}(t) .
$$
For the residuals we simulate from the same Gaussian autoregressive model used under $H_0$, while for the mean we consider the \emph{deterministic trend} and \emph{non trivial power alternative} scenarios presented in Section \ref{sec:3scenarios}. For simplicity we set $\mu_{\ell m;0}=0$ uniformly.

Again, all the results are obtained by fixing $q_N=[N^{1/3}]$ and computing the test statistic on a equally-spaced grid on the unit interval where each bin is on length $1/N$, for $B = 2000$ replications.

\paragraph{Deterministic trend.}
As a first model for the non-stationarity mean, we choose
$$\mu_{00}(t) = 5 \cdot t^{\alpha}, \quad \mu_{\ell 0}(t) = -\frac{2}{\ell(\ell+1)} \cdot t^{\alpha} \ \text{for $\ell$ even}, \quad \mu_{\ell m}(t) = 0 \ \text{otherwise},$$ 
which corresponds to the \emph{deterministic trend} scenario with a constant $\alpha$ over the multipoles' window, and coefficients
$$\mu_{00;1} = 5, \quad \mu_{\ell 0;1} = -\frac{2}{\ell(\ell+1)} \ \text{for $\ell$ even}, \quad \mu_{\ell m;1} = 0 \ \text{otherwise}.$$

In particular, we set $L=10$, and we study the behaviour of the power as the number of temporal observations increases, for $\alpha=0.2, 0.5, 1$.
The approximate test's power obtained for different values of $\alpha$ is reported in Figures \ref{fig:power}. The three different curves in each plot correspond to the three different quantiles respectively of order $1-\gamma=0.90,0.95,0.99$.
 Of course for higher values of $\alpha$ the deterministic trends under the alternative are stronger and hence the probability of rejection under $H_1$ becomes very close to one even at rather small sample sizes. In any case the consistency of the testing procedures which was established by our theoretical results shows up clearly in the simulations, as demonstrated by the uniform convergence to unity of the rejection probabilities under all settings, as $N$ diverges.

\begin{figure}[ht!]
\centering
\includegraphics[scale=0.5]{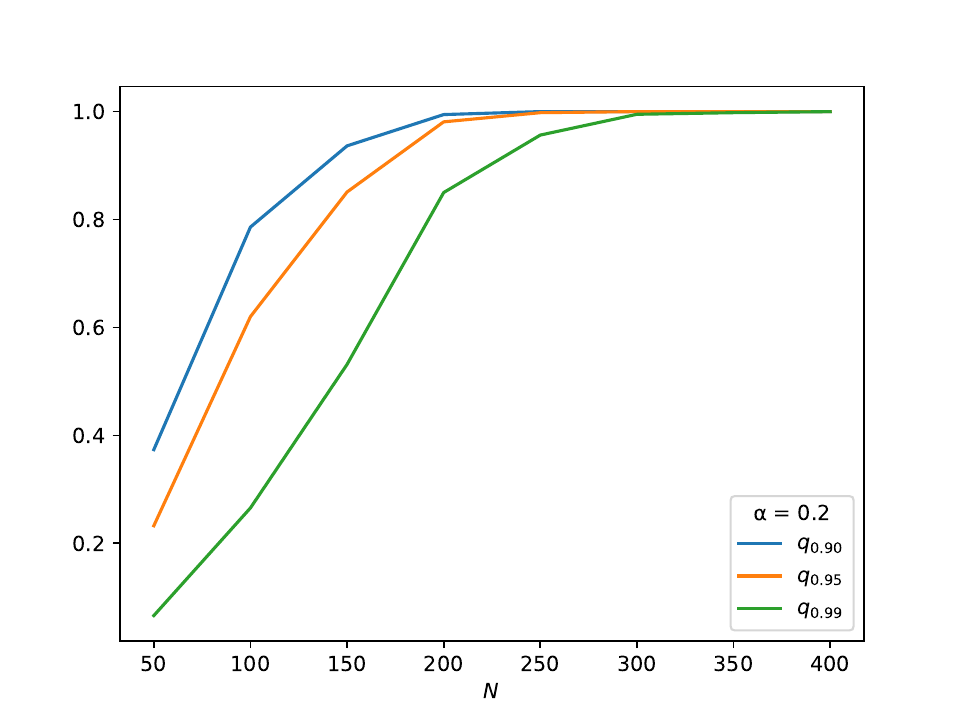}\\
\includegraphics[scale=0.5]{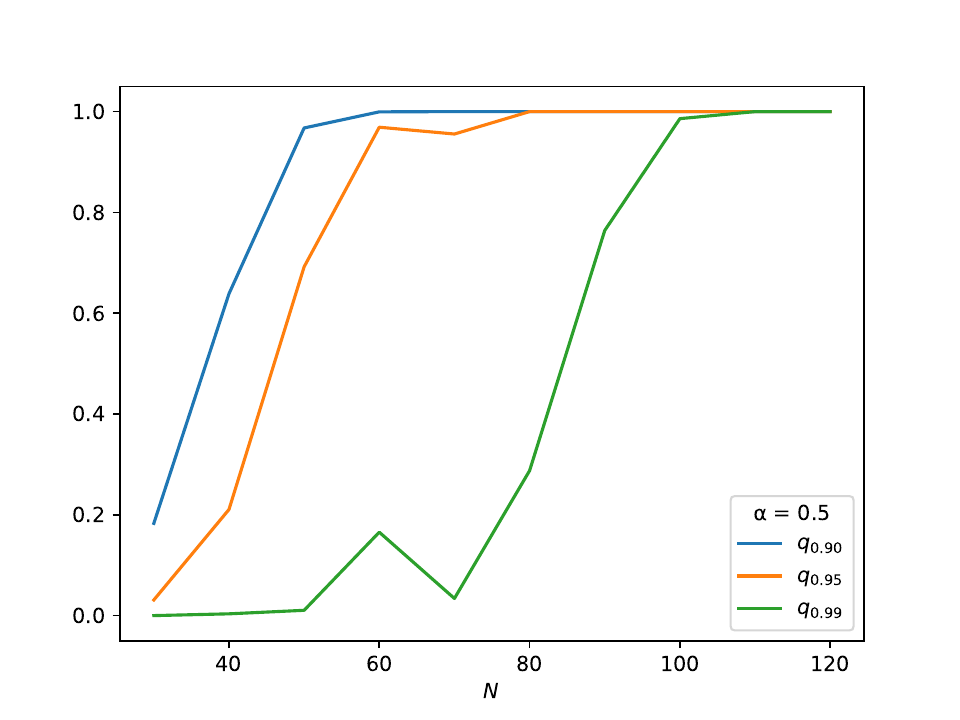}\\
\includegraphics[scale=0.5]{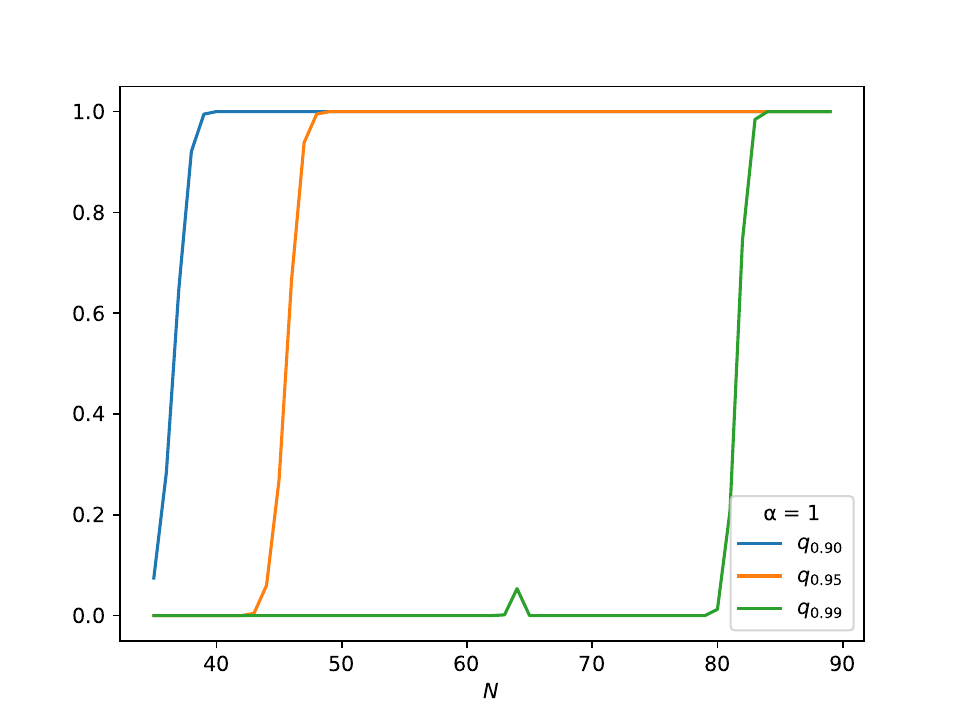}\\
\caption{From top to bottom: test's power under the \emph{deterministic trend} scenario for $\alpha=0.2,0.5,1$, computed for $L=10$, and based on $B=2000$ replicates. The lines correspond to the selected quantiles of order $0.90$ (blu line), $0.95$ (orange line), $0.99$ (green line).}
\label{fig:power}
\end{figure}

\paragraph{Nontrivial power alternative.}
Finally we explore the boundary case alternative. More precisely, we consider the following model for the non-stationary mean
$$\mu_{00}(t) = 5 \cdot\frac{t^\beta}{N^{\beta+1/2}}, \quad \mu_{\ell 0}(t) = -\frac{c}{\ell(\ell+1)} \cdot \frac{t^\beta}{N^{\beta+1/2}} \ \text{for $\ell$ even}, \quad \mu_{\ell m}(t) = 0 \ \text{otherwise},$$ 
for some $c, \beta >0$, which corresponds to the \emph{nontrivial power alternative} scenario with a constant $\alpha = -1/2$ over the mulitpoles' window, $g_{\ell m}(u) = u^\beta$, and coefficients
$$\mu_{00;1} = 5, \quad \mu_{\ell 0;1} = -\frac{2}{\ell(\ell+1)} \ \text{for $\ell$ even}, \quad \mu_{\ell m;1} = 0 \ \text{otherwise}.$$
Here, we set $L=10$, $N=100$, and $\beta = 1$, and we study the behaviour of the power as $\left |\sum_{\ell, m} \mu_{\ell m;1} \right|$ increases, which is obtained by increasing $c$. Indeed, we set $c=2, 100, 200, 300, 400$. Figure \ref{fig:power2} shows the results for the three different quantiles of order $1-\gamma = 0.90,0.95,0.99$.

\begin{figure}[ht!]
\centering
\includegraphics[scale=0.5]{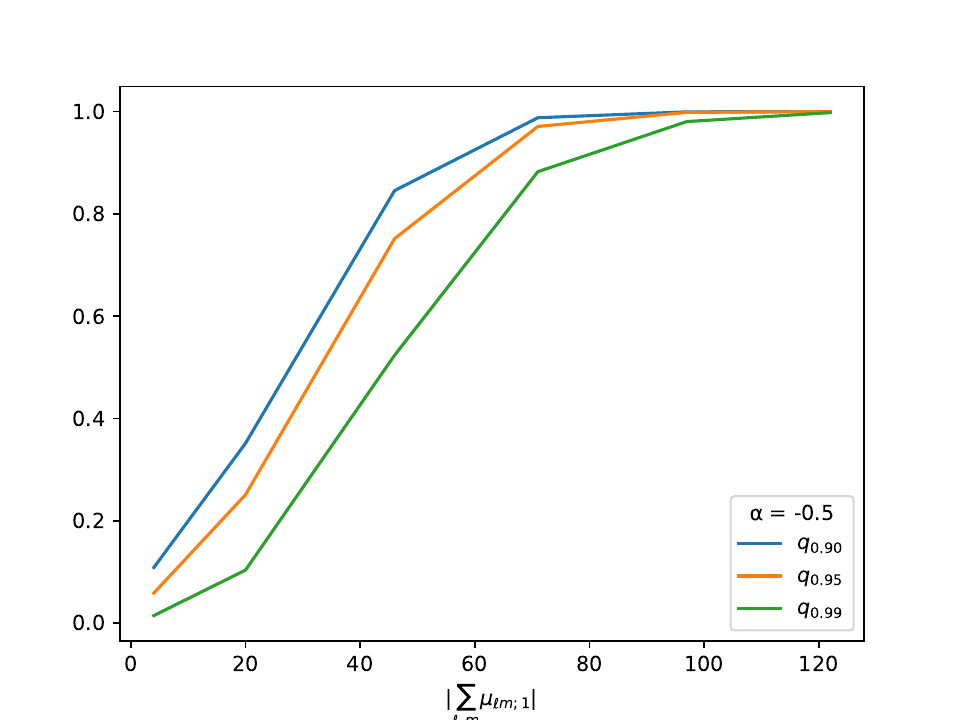}
\caption{Test's power under the \emph{nontrivial power alternative} scenario, computed for $L=10$, and based on $B=2000$ replicates. The lines correspond to the selected quantiles of order $0.90$ (blu line), $0.95$ (orange line), $0.99$ (green line). }
\label{fig:power2}
\end{figure}

\subsection{Application to NCEP data}

As mentioned in the Introduction, the above methodology is applied to \textit{global (land and ocean) surface temperature anomalies}. More in detail, the dataset is built starting from the NCEP/NCAR monthly averages of the surface air temperature (in degrees Celsius) from 1948 to 2020 ($N=73$), over a global grid with $2.5^{\circ}$ spacing for latitude and longitude, see \cite{ncep}. We recall that, following the World Meteorological Organization policy, temperature anomalies are obtained by subtracting the long-term monthly means relative to the 1981-2010 base period; they are then averaged over months to switch from a monthly scale to an annual scale.

Using the \texttt{HEALPix} package (see \cite{HealPix} and the official \texttt{HEALPix} \href{https://healpix.sourceforge.io/}{\underline{website}}), we convert the gridded data into spherical maps with a resolution of $12\cdot\operatorname{NSIDE}^2$ pixels (NSIDE = 16) -- see Figure \ref{fig:map}. Then, we compute the Fourier coefficients and the test statistic based on the window of consecutive multipoles $\{0,1,\dots,12\}$, and $q_N=[N^{1/3}]=4$.

The observed value of the sup test statistic is about 3.27 (see also Figure \ref{fig:ncep-test}), which strongly suggests rejecting the null hypothesis of a stationary (in time) mean function.

\begin{figure}[ht!]
    \centering
    \includegraphics[scale=0.6]{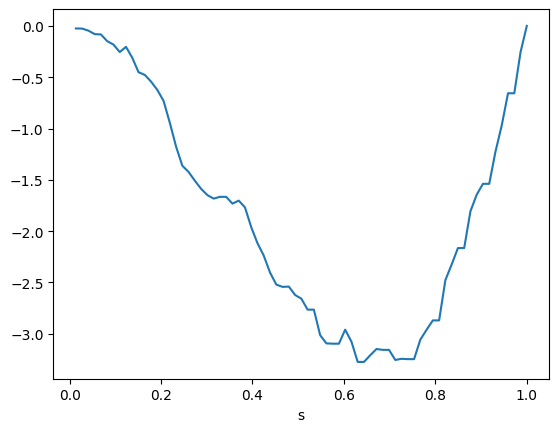}
    \caption{Observed sup test statistic computed over the window of consecutive multipoles $\{0,1,\dots,12\}$.}
    \label{fig:ncep-test}
\end{figure}

We now run the same test, neglecting the multipole $\ell=0$, that is, subtracting at each time $t$ the global sample average of the temperature, which is then made constant over time. We then repeat the same approach, dropping in turn the first $\underline{\ell}-1$ multipoles, for $\underline{\ell}=2,4,6,8$: this way we explore the existence of non-stationarity features that do not involve the mean or the following lowest multipoles. Our test statistic is still significant for $\gamma = 0.01$ (see Table \ref{tab:ncep-test}) in each of these cases (apart from the very last, where significance occurs only at the level $\gamma = 0.05$). These results strongly suggest that climate change occurs not only in the mean temperature at a global level but also in the nature of fluctuations on a number of different scales corresponding to a few dozen degrees. The meaning and consequences of these preliminary findings on the nature of non-stationarity for climate data are left for future investigations.

\begin{table}[ht!]
    \centering
    \begin{tabular}{c|c|c|c|c|c|c}
        $\underline{\ell}$ & 0 & 1 & 2 & 4 & 6 & 8 \\
        \hline
        sup& 3.27& 2.94& 2.72& 2.17& 1.83& 1.48
    \end{tabular}
    \caption{Observed sup test statistic for the NCEP data, computed over the window of consecutive multipoles $\{\underline{\ell},\underline{\ell}+1, \dots, 12\}$.}
    \label{tab:ncep-test}
\end{table}


\appendix

\section{On the Consistency of $\widehat\sigma^2$}

\subsection{Convergence under the null hypothesis $H_0$}

The next Lemma gives the rate of convergence of $\widehat\sigma$ to $\sigma$, under the null $H_0$.

\begin{Lem}\label{lemma:sigma-H0}
Under the null hypothesis $H_0$, as $N\to \infty$,
$$
\mathbb{E}\left |\widehat\sigma - \sigma\right| = O\left (\frac{q_N}{\sqrt{N}}\right ).
$$
\end{Lem}
\begin{proof}[Proof of Lemma \ref{lemma:sigma-H0}]
In the following we will make extensive use of Assumption \ref{assumption:cum}.

By standard arguments, we can write
\begin{flalign*}
\widehat\sigma^2-\sigma^2&=\sum_{\ell=\underline{\ell}}^{\overline{\ell}} (2\ell+1)\sum_{\tau=-q_N}^{q_N} \left(1-\frac{\abs{\tau}}{q_N+1}\right)\left[\widehat C_\ell(\tau)-C_\ell(\tau)\right]\\
&\quad + \sum_{\ell=\underline{\ell}}^{\overline{\ell}} (2\ell+1)\sum_{\tau=-q_N}^{q_N}\frac{\abs{\tau}}{q_N+1}C_\ell(\tau) + 2 \, \sum_{\ell=\underline{\ell}}^{\overline{\ell}} (2\ell+1)\sum_{\tau=q_N+1}^{\infty}C_\ell(\tau)\\
&=\sum_{\ell=\underline{\ell}}^{\overline{\ell}} (2\ell+1)\sum_{\tau=-q_N}^{q_N} \left(1-\frac{\abs{\tau}}{q_N+1}\right)\left[\widehat C_\ell(\tau)-C_\ell(\tau)\right]+o_\Prob(1)\,.
\end{flalign*}
Heuristically our argument can be illustrated as follows: we will show that the expectation of the terms $\left[\widehat C_\ell(\tau)-C_\ell(\tau)\right]$ is uniformly of order $O(\frac{1}{\sqrt{N}})$, whence the result will follow immediately because the first sum is finite and the second has clearly cardinality $O(q_N)$. Indeed, 
\begin{flalign*}
&\E{\abs{\sum_{\ell=\underline{\ell}}^{\overline{\ell}} (2\ell+1)\sum_{\tau=-q_N}^{q_N} \left(1-\frac{\abs{\tau}}{q_N+1}\right)\left[\widehat C_\ell(\tau)-C_\ell(\tau)\right]}}\\
&\le \sum_{\ell=\underline{\ell}}^{\overline{\ell}} (2\ell+1)\sum_{\tau=-q_N}^{q_N}\E{\abs{\widehat C_\ell(\tau)-C_\ell(\tau)}}\\
&=\sum_{\ell=\underline{\ell}}^{\overline{\ell}} (2\ell+1)\sum_{\tau=-q_N}^{q_N}\E{\abs{\frac{1}{2\ell+1}\frac1{N-\abs{\tau}}\sum_{m=-\ell}^\ell \sum_{t=1}^{N-\abs{\tau}} (a_{\ell m}(t)-\overline{a}_{\ell m}(N))(a_{\ell m}(t+\abs{\tau})-\overline{a}_{\ell m}(N))-C_\ell(\tau)}}\\
&\le\sum_{\ell=\underline{\ell}}^{\overline{\ell}}\sum_{m=-\ell}^\ell\sum_{\tau=-q_N}^{q_N}\E{\abs{\frac1{N-\abs{\tau}}\sum_{t=1}^{N-\abs{\tau}} (a_{\ell m}(t)-\overline{a}_{\ell m}(N))(a_{\ell m}(t+\abs{\tau})-\overline{a}_{\ell m}(N))-C_\ell(\tau)}}\\
&\le\sum_{\ell=\underline{\ell}}^{\overline{\ell}}\sum_{m=-\ell}^\ell\sum_{\tau=-q_N}^{q_N}\sqrt{\E{\left(\frac1{N-\abs{\tau}}\sum_{t=1}^{N-\abs{\tau}} a_{\ell m}(t)a_{\ell m}(t+\abs{\tau})-C_\ell(\tau)\right)^2}}\\
&+\sum_{\ell=\underline{\ell}}^{\overline{\ell}}\sum_{m=-\ell}^\ell\sum_{\tau=-q_N}^{q_N}\left (\E{ \overline{a}^2_{\ell m}(N )} + 2\sqrt{\E{ \overline{a}^2_{\ell m}(N-\abs{\tau})}} \sqrt{\E{ \overline{a}^2_{\ell m}(N)}} \right), 
\end{flalign*}
and it can be easily seen that
\begin{flalign*}
\E{ \overline{a}^2_{\ell m}(N-\abs{\tau})} = \E{ \left(\frac{1}{N-\abs{\tau}} \sum_{t=1}^{N-\abs{\tau}} a_{\ell m}(t)\right)^2} \le \frac{1}{N-\abs{\tau}} \sum_{v=-\infty}^{+\infty} |C_\ell(v)|, \qquad \text{for } \tau = 0, 1, \dots, q_N.
\end{flalign*}
Moreover,
\begin{align*}
&\E{\left(\frac1{N-\abs{\tau}}\sum_{t=1}^{N-\abs{\tau}} a_{\ell m}(t)a_{\ell m}(t+\abs{\tau})-C_\ell(\tau)\right)^2} \\
&= \frac{1}{(N-\abs{\tau})^2} \sum_{t,t'=1}^{N-\abs{\tau}} \E{a_{\ell m}(t)a_{\ell m}(t+\abs{\tau})a_{\ell m}(t')a_{\ell m}(t'+\abs{\tau})  }  - C^2_{\ell}(\tau)\\
&=O\left (\frac{1}{N-\abs{\tau}}\right).
\end{align*}
In fact, by stationarity,
\begin{align*}
&\E{a _{\ell m}(t_1)a _{\ell m}(t_1+\abs{\tau})a _{\ell m}(t_2)a _{\ell m}(t_2+\abs{\tau})}\\
&=\E{a _{\ell m}(t_1)a _{\ell m}(t_1+\abs{\tau})}\E{a _{\ell m}(t_2)a _{\ell m}(t_2+\abs{\tau})}+\E{a _{\ell m}(t_1)a _{\ell m}(t_2)}\E{a _{\ell m}(t_1+\abs{\tau})a _{\ell m}(t_2+\abs{\tau})}\\
&+\E{a _{\ell m}(t_1)a _{\ell m}(t_2+\abs{\tau})}\E{a _{\ell m}(t_1+\abs{\tau})a _{\ell m}(t_2)}+\operatorname{cum}\big(a _{\ell m}(t_1),a _{\ell m}(t_1+\abs{\tau}),a _{\ell m}(t_2),a _{\ell m}(t_2+\abs{\tau})\big)\\
&=C^2_\ell(\tau)+C^2_{\ell}(t_2-t_1)+C_\ell (t_2-t_1 +\abs{\tau} )C_\ell(t_1-t_2+\abs{\tau})\\&+\operatorname{cum}\big(a _{\ell m}(0),a _{\ell m}(\abs{\tau}),a _{\ell m}(t_2-t_1),a _{\ell m}(t_2-t_1+\abs{\tau})\big),
\end{align*}
and hence
\begin{flalign*}
&\E{\left(\frac1{N-\abs{\tau}}\sum_{t=1}^{N-\abs{\tau}} a_{\ell m}(t)a_{\ell m}(t+\abs{\tau})-C_\ell(\tau)\right)^2} \\
&= \frac1{(N-\abs{\tau})^2} \sum_{t_1,t_2=1}^{{N-\abs{\tau}}} 
\bigg[C^2_{\ell}(t_2-t_1)+C_\ell (t_2-t_1 +\abs{\tau} )C_\ell(t_1-t_2+\abs{\tau})\\
&+\operatorname{cum}\big(a _{\ell m}(0),a _{\ell m}(\abs{\tau}),a _{\ell m}(t_2-t_1),a _{\ell m}(t_2-t_1+\abs{\tau})\big)\bigg]\\
&\le\frac{1}{(N-\abs{\tau})}\sum_{v=-\infty}^{\infty} 
\bigg[C^2_{\ell}(v)+\abs{C_\ell (v +\abs{\tau} )C_\ell(-v+\abs{\tau})}\\
&+\abs{\operatorname{cum}\big(a _{\ell m}(0),a _{\ell m}(\abs{\tau}),a _{\ell m}(v),a _{\ell m}(v+\abs{\tau})\big)}\bigg]\\
&\le \frac{2}{(N-\abs{\tau})}\sum_{v=-\infty}^{\infty}  C^2_{\ell}(v)+\frac{1}{(N-\abs{\tau})} \sum_{v_1,v_2,v_3=-\infty}^\infty \abs{\operatorname{cum}\big(a _{\ell m}(0),a _{\ell m}(v_1),a _{\ell m}(v_2),a _{\ell m}(v_3)\big)},
\end{flalign*}
whence the result follows because both series are summable by assumption.
\end{proof}

\subsection{Convergence under the alternative hypothesis $H_1$}

In the following Lemma, we study the behaviour of $\widehat\sigma^2$ under the alternative hypothesis $H_1$.

\begin{Lem}\label{lemma:sigma-hat}
As $N\to\infty$,
\begin{flalign*}
\widehat\sigma^2=\begin{dcases} \sigma^2 + q_N N^{2\overline{\alpha}} \,  \sum_{\ell\in \overline{\mathcal{I}}}\sum_{m=-\ell}^\ell \Var\left(g_{\ell m}(U)\right)+O_\Prob( q_N N^{\overline{\alpha}-1/2})
& \text{ if }\quad  \overline{\alpha}>-1/2\\[4pt]
 \sigma^2 +o_\Prob\left(1\right)&  \text{ if }\quad  \overline{\alpha} \le -1/2
\end{dcases},
\end{flalign*}
where $U\sim\operatorname{U}(0,1)$.
\end{Lem}

As a consequence, when $\overline{\alpha} >-1/2$, we can distinguish two different cases:
\begin{itemize}
\item if $q_N N^{2\overline{\alpha}} \to c\ge0$ (without loss of generality), then
$$\widehat \sigma^2
\overset{p}{\to}  \sigma^2 + c\, \sum_{\ell\in \overline{\mathcal{I}}}\sum_{m=-\ell}^\ell \Var\left(g_{\ell m}(U)\right);$$
\item if $q_N N^{2\overline{\alpha}} \to +\infty$, then
$$\frac{\widehat \sigma^2}{q_N N^{2\overline{\alpha}}}
\overset{p}{\to} \sum_{\ell\in \overline{\mathcal{I}}}\sum_{m=-\ell}^\ell \Var\left(g_{\ell m}(U)\right).$$
\end{itemize}



\proof[Proof of Lemma \ref{lemma:sigma-hat}]
We have to study the limit as $N\to\infty$ of the sample variance estimator
\begin{flalign*}
\widehat\sigma^2
&=\sum_{\ell=\underline{\ell}}^{\overline{\ell}}\sum_{m=-\ell}^\ell \sum_{\tau=-q_N }^{q_N}\left(1-\frac{\abs{\tau}}{q_N+1}\right)\frac1{N-\abs{\tau}}  \sum_{t=1}^{N-\abs{\tau}}(\beta_{\ell m}(t)-\widehat{\mu}_{\ell m})(\beta_{\ell m}(t+\abs{\tau})-\widehat{\mu}_{\ell m}).
\end{flalign*}
Note that
\begin{align}\label{eq:dec}
&(\beta_{\ell m}(t)-\widehat{\mu}_{\ell m})(\beta_{\ell m}(t+\abs{\tau})-\widehat{\mu}_{\ell m})\notag\\
& = \left (a_{{\ell} m }(t)-\frac 1N\sum_{u=1}^N a_{{\ell} m}(u)\right)\left (a_{{\ell} m }(t+\abs{\tau})-\frac 1N\sum_{u=1}^N a_{{\ell} m}(u)\right)\notag \\
&+    \left (\mu _{\ell m}(t) -\frac 1N\sum_{u=1}^N\mu _{\ell m}(u)\right )\left (\mu _{\ell m}(t+\abs{\tau}) -\frac 1N\sum_{u=1}^N\mu _{\ell m}(u)\right )\notag\\
&+ \left (a_{{\ell} m }(t+\abs{\tau})-\frac 1N\sum_{u=1}^N a_{{\ell} m}(u)\right)\left (\mu _{\ell m}(t) -\frac 1N\sum_{u=1}^N\mu _{\ell m}(u)\right )\notag\\
&+ \left (a_{{\ell} m }(t)-\frac 1N\sum_{u=1}^N a_{{\ell} m}(u)\right)\left (\mu _{\ell m}(t+\abs{\tau}) -\frac 1N\sum_{u=1}^N\mu _{\ell m}(u)\right ).
\end{align}

The first term in the last sum has been already studied under $H_0$, and gives the convergence in probability to $\sigma$, as $N\to\infty$.

For the second term, we get 
\begin{align*}
&\sum_{\ell,m}\sum_{\tau=-q_N}^{q_N}\left(1-\frac{\abs{\tau}}{q_N+1}\right)\frac1{N-\abs{\tau}} \sum_{t=1}^{N-\abs{\tau}}\left (\mu _{\ell m}(t) -\frac 1N\sum_{u=1}^N\mu _{\ell m}(u)\right )\left (\mu _{\ell m}(t+\abs{\tau}) -\frac 1N\sum_{u=1}^N\mu _{\ell m}(u)\right )\\
&= q_N N^{2\overline{\alpha}} \sum_{\ell,m}\frac{N^{2\alpha_\ell}}{N^{2\overline{\alpha}}} \frac{1}{ q_N} \sum_{\tau=-q_N}^{q_N}\left(1-\frac{\abs{\tau}}{q_N+1}\right) \\
&\times\frac1{N-\abs{\tau}} \sum_{t=1}^{N-\abs{\tau}}\left (g _{\ell m}\left(\frac{t}{N}\right) -\frac 1N\sum_{u=1}^N g_{\ell m}\left(\frac{u}{N}\right)\right )\left (g _{\ell m}\left(\frac{t+\abs{\tau}}{N}\right) -\frac 1N\sum_{u=1}^N g _{\ell m}\left(\frac{u}{N}\right)\right )\\
&= q_N N^{2\overline{\alpha}} \left(\sum_{\ell\in \overline{\mathcal{I}}} \sum_{m=-\ell}^\ell \int_{-1}^1 (1-\abs{x}) dx \int_0^1 \left(g_{\ell m}(t) -\int_0^1g_{\ell m}(u) \,du\right)^2 dt + o(1) \right)\\
&=   q_N N^{2\overline{\alpha}} \left (\sum_{\ell\in \overline{\mathcal{I}}}\sum_{m=-\ell}^\ell \Var\left(g_{\ell m}(U)\right) + o(1) \right),
\end{align*}
where we used the fact that $q_N=o(\sqrt N)$ and the $g_{\ell m}$ functions are piecewise andcontinuous bounded on $[0,1]$ to exploit their Riemannian integrability.
For the last term (as well as for the third), we obtain
\begin{align*}
&\sum_{\ell,m}\sum_{\tau=-q_N}^{q_N}\left(1-\frac{\abs{\tau}}{q_N+1}\right)\frac1{N-\abs{\tau}} \sum_{t=1}^{N-\abs{\tau}}\left (a_{\ell m}(t) -\frac 1N\sum_{u=1}^N a_{\ell m}(u)\right )\left (\mu_{\ell m}(t+\abs{\tau}) -\frac 1N\sum_{u=1}^N\mu _{\ell m}(u)\right )\\
&= q_N N^{\overline{\alpha}} \sum_{\ell,m}\frac{N^{\alpha_\ell}}{N^{\overline{\alpha}}} \frac{1}{ q_N} \sum_{\tau=-q_N}^{q_N}\left(1-\frac{\abs{\tau}}{q_N+1}\right) \\
&\times \frac1{N-\abs{\tau}} \sum_{t=1}^{N-\abs{\tau}}\left (a_{\ell m}(t) -\frac 1N\sum_{u=1}^N a_{\ell m}(u)\right )\left (g _{\ell m}\left(\frac{t+\abs{\tau}}{N}\right) -\frac 1N\sum_{u=1}^N g _{\ell m}\left(\frac{u}{N}\right)\right )\\
&= O_\Prob(q_N N^{\overline{\alpha} - 1/2}).
\end{align*}
Indeed, we observe that its expectation is zero and
\begin{align*}
&\mathbb{E}\left [\frac1{N-\abs{\tau}} \sum_{t=1}^{N-\abs{\tau}}\left (a_{\ell m}(t) -\frac 1N\sum_{u=1}^N a_{\ell m}(u)\right ) \left (g _{\ell m}\left(\frac{t+\abs{\tau}}{N}\right) -\frac 1N\sum_{u=1}^N g _{\ell m}\left(\frac{u}{N}\right)\right ) \right]^2\\
&= \frac1{(N-\abs{\tau})^2}  \sum_{t,t'=1}^{N-\abs{\tau}} \mathbb{E}\left[\left (a_{\ell m}(t) -\frac 1N\sum_{u=1}^N a_{\ell m}(u)\right )  \left (a_{\ell m}(t') -\frac 1N\sum_{u=1}^N a_{\ell m}(u)\right ) \right]\\
&\times \left (g _{\ell m}\left(\frac{t+\abs{\tau}}{N}\right) -\frac 1N\sum_{u=1}^N g _{\ell m}\left(\frac{u}{N}\right)\right ) \left (g _{\ell m}\left(\frac{t'+\abs{\tau}}{N}\right) -\frac 1N\sum_{u=1}^N g _{\ell m}\left(\frac{u}{N}\right)\right ) \\
&= \frac1{(N-\abs{\tau})^2}  \sum_{t,t'=1}^{N-\abs{\tau}} \left ( C_\ell(t-t') + \frac{1}{N^2} \sum_{u,u'=1}^N C_\ell(u-u') - \frac1N \sum_{u=1}^N C_\ell(t-u) - \frac1N \sum_{u=1}^N C_\ell(t'-u) \right ) \\
&\times \left (g _{\ell m}\left(\frac{t+\abs{\tau}}{N}\right) -\frac 1N\sum_{u=1}^N g _{\ell m}\left(\frac{u}{N}\right)\right ) \left (g _{\ell m}\left(\frac{t'+\abs{\tau}}{N}\right) -\frac 1N\sum_{u=1}^N g _{\ell m}\left(\frac{u}{N}\right)\right ) \\
&= O\left ( \frac1{N-\abs{\tau}}\right),
\end{align*}
where we have used Assumption \ref{assumption:cum} and again the regularity conditions on the functions $g_{\ell m}$'s. By combining all the previous computations, we get
$$
\widehat \sigma^2=\sigma^2+ q_N N^{2\overline{\alpha}} \left(\sum_{\ell\in \overline{\mathcal{I}}}\sum_{m=-\ell}^\ell \Var\left(g_{\ell m}(U)\right) + o(1)\right)+ O_\Prob(q_N N^{\overline{\alpha}-1/2}).$$
Note that if $\overline{\alpha} \le -1/2$, $O_\Prob( q_N N^{\overline{\alpha}-1/2}) = o_\Prob(1)$, and $q_N N^{2\overline{\alpha}} = o(1)$. 
Hence, the claimed result is established.
\endproof


\section{Proof of Theorem \ref{thm-H0}}\label{proof:thm-H0}

The main idea in the proof is to split our test statistic into two different parts; in the first, the normalization is implemented by means of the (unknown) normalization constant $\sigma$, whereas in the second we collect a remainder term that compares the statistic under the theoretical normalization with the \emph{studentized} version, where the normalization is implemented by means of the sample variance $\widehat{\sigma}$. More precisely, we have the following identity:
\begin{equation*}
A_N(s)=A_{1;N}(s)+A_{2;N}(s)\,,
\end{equation*}
where
\begin{equation*}
A_{1;N}(s)=\frac{1}{\sqrt{N}}\frac{1}{\sigma}%
\sum_{t=1}^{[Ns]}\sum_{\ell =\underline{\ell}}^{\overline{\ell}}%
\sum_{m=-\ell }^{\ell } (\beta _{\ell m}(t)-\widehat{\mu }_{\ell m})
\end{equation*}
and
\begin{equation*}
A_{2;N}(s)=\frac{1}{\sqrt{N}}\left(\frac{1}{\widehat\sigma}-\frac{1}{\sigma}\right)
\sum_{t=1}^{[Ns]}\sum_{\ell =\underline{\ell}}^{\overline{\ell}}\sum_{m=-\ell }^{\ell}
\left( \beta _{\ell m}(t)-\widehat{\mu }_{\ell m}\right).
\end{equation*}

Let us first prove that, under the null hypothesis $H_0$,
\begin{equation}\label{sup-zero}
\sup_{s\in[0,1]}\abs{{A}_{2;N}(s)}=o_{\Prob}(1)\,.
\end{equation}
Indeed,
\begin{flalign*}
\sup_{s\in[0,1]}\abs{{A}_{2;N}(s)}
&=\sup_{s\in[0,1]}\abs{\frac{1}{\sqrt{N}}\left(\frac{1}{\widehat\sigma}-\frac{1}{\sigma}\right)
\sum_{t=1}^{[Ns]}\sum_{\ell =\underline{\ell}}^{\overline{\ell}}\sum_{m=-\ell }^{\ell}
\left( \beta _{\ell m}(t)-\widehat{\mu }_{\ell m}\right)}\\
&=\sup_{s\in[0,1]}\abs{\frac{1}{\sqrt{N}}\frac{1}{\sigma}\left(\frac{\sigma}{\widehat\sigma}-1\right)
\sum_{t=1}^{[Ns]}\sum_{\ell =\underline{\ell}}^{\overline{\ell}}\sum_{m=-\ell }^{\ell}
\left( \beta _{\ell m}(t)-\widehat{\mu }_{\ell m}\right)}\\
&=\abs{\frac{\sigma}{\widehat\sigma}-1}
\sup_{s\in[0,1]}\abs{\frac{1}{\sigma}\frac{1}{\sqrt{N}}\sum_{t=1}^{[Ns]}\sum_{\ell =\underline{\ell}}^{\overline{\ell}}\sum_{m=-\ell }^{\ell}
\left( \beta _{\ell m}(t)-\widehat{\mu }_{\ell m}\right)}\,,
\end{flalign*}
where, under $H_0$, 
$$
\abs{\frac{\sigma}{\widehat\sigma}-1} \to 0  \quad \text{ in probability as $N \to \infty$}
$$
and 
$$
\sup_{s\in[0,1]}\abs{\frac{1}{\sigma}\frac{1}{\sqrt{N}}\sum_{t=1}^{[Ns]}\sum_{\ell =\underline{\ell}}^{\overline{\ell}}\sum_{m=-\ell }^{\ell}
\left( \beta _{\ell m}(t)-\widehat{\mu }_{\ell m}\right)} \implies \sup_{s\in[0,1]}\abs{W(s)}\,,
$$
which gives the desired result.

\subsection{Convergence of the finite-dimensional distributions}

As usual weak convergence will be established by proving convergence of the finite-dimensional distributions and tightness.

In the general case of possibly non-Gaussian sphere-cross-time random fields it is no longer the case that the spherical harmonic processes $a_{\ell m }(t)$ are independent for different values of $\ell$ and $m$; indeed, it was proved in \cite{BaldiMarinucciSPL} that, under isotropy, independence of the spherical harmonic coefficients necessarily implies Gaussianity. However, under isotropy the spherical harmonic processes are still necessarily uncorrelated and their dependence only affects higher order joint moments and cumulants (the so-called polyspectra, see e.g., \cite[Chapter 6]{MaPeCUP}). As a consequence, the computations of the limiting covariance function is basically unaltered in Gaussian and non-Gaussian circumstances; for the finite dimensional distribution it is enough to resort to standard multivariate central limit theorems for stationary vector processes. 

In the sequel we will use the following expression for $A_{1;N}(s)$, which holds under $H_0$,
\begin{eqnarray}\label{eq:alm-bar}
A_{1;N}(s)&=&\frac{1}{\sigma\sqrt{N}}\sum_{t=1}^{[Ns]}\sum_{\ell =\underline{\ell}}^{\overline{\ell}}
\sum_{m=-\ell }^{\ell } (\beta _{\ell m}(t)-\widehat{\mu }_{\ell m})\notag\\
&=&\frac{1}{\sigma\sqrt{N}}\sum_{\ell =\underline{\ell}}^{\overline{\ell}}\sum_{m=-\ell }^{\ell }[Ns]\left (\bar a_{\ell m}(Ns)-\bar a_{\ell m}(N)\right )\,,
\end{eqnarray}
with
\begin{equation*}
\bar a_{\ell m}(Nu):=\begin{dcases}
    \frac1{[Nu]}\sum_{t=1}^{[Nu]}a_{\ell m}(t) &\qquad u\in[1/N,1]\\
    0 &\qquad u\in [0,1/N)
\end{dcases}\,.
\end{equation*}
Again, when writing $s\in[0,1]$ we always implicitly make the previous distinction.

\paragraph{Convergence of the covariance functions.}

We study the covariance function of the leading term $A_{1;N}(s)$, which is easily seen to be given by
\begin{align*}
&\mathbb{E}[A_{1;N}(s)A_{1;N}(s^{\prime })] \\
&= \frac{1}{\sigma^2N}\sum_{\ell,\ell^\prime =\underline{\ell}}^{\overline{\ell}}\sum_{m=-\ell }^{\ell }\sum_{m^{\prime }=-\ell^{\prime }}^{\ell^{\prime }}[Ns][Ns']\E{\left (\bar a_{\ell m}(Ns)-\bar a_{\ell m}(N)\right )\left (\bar a_{\ell' m'}(Ns')-\bar a_{\ell' m'}(N)\right )}\\
&= \frac{1}{\sigma^2 N}\sum_{\ell =\underline{\ell}}^{\overline{\ell}}\sum_{m=-\ell }^{\ell }[Ns][Ns']\E{\left (\bar a_{\ell m}(Ns)-\bar a_{\ell m}(N)\right )\left (\bar a_{\ell m}(Ns')-\bar a_{\ell m}(N)\right )}\\
&\quad+ \frac{1}{\sigma^2 N}\sum_{\ell \ne \ell'}\sum_{m=-\ell }^{\ell }\sum_{m^{\prime }=-\ell^{\prime }}^{\ell^{\prime }}[Ns][Ns']\E{\left (\bar a_{\ell m}(Ns)-\bar a_{\ell m}(N)\right )\left (\bar a_{\ell' m'}(Ns')-\bar a_{\ell' m'}(N)\right )}
\end{align*}
where the second term of the last sum is zero since the $a_{\ell m}$'s are uncorrelated for different $(\ell,m)$, while
\begin{eqnarray*}
&&\E{\left (\bar a_{\ell m}(Ns)-\bar a_{\ell m}(N)\right )\left (\bar a_{\ell m}(Ns')-\bar a_{\ell m}(N)\right )}\\
&&=\E{\bar a_{\ell m}(Ns)a_{\ell m}(Ns')+\bar a_{\ell m}(N)^2-\bar a_{\ell m}(Ns)\bar a_{\ell m}(N)-\bar a_{\ell m}(Ns')\bar a_{\ell m}(N)}\ .
\end{eqnarray*}
Now, for all $u,u' \in [0,1]$ we have that
\begin{align}
&\frac{1}{\sigma^2 N}\sum_{\ell =\underline{\ell}}^{\overline{\ell}}\sum_{m=-\ell }^{\ell }[Ns][Ns']\E{\bar a_{\ell m}(Nu)a_{\ell m}(Nu')}\notag\\
&=\frac{[Ns][Ns']}{\sigma N}\sum_{\ell =\underline{\ell}}^{\overline{\ell}}\frac{(2\ell+1)}{[Nu][Nu']}\sum_{t=1}^{[Nu]}\sum_{t'=1}^{[Nu']} C_{\ell}(t-t')\notag\\
&=\frac{[Ns][Ns']}{\sigma^2 N}\sum_{\ell =\underline{\ell}}^{\overline{\ell}}\frac{(2\ell+1)}{[Nu][Nu']}\left\{\sum_{\tau=-[N(u\wedge u^{\prime })]+1}^{[N(u\wedge u^{\prime})]-1} \!\!\!\!\!\!\!\!\!\!\!\!\left(N(u\wedge u^{\prime })-|\tau|\right)C_\ell(\tau)+ \sum_{t'=1}^{[N(u\wedge u')]} \!\!\!\!\!\!\sum_{t=[N(u\wedge u')]+1}^{[N(u\vee u')]} \!\!\!\!\!\!C_\ell(t-t')\right \}		\notag\\
&=\frac{(u\wedge u')[Ns][Ns']}{\sigma^2}\sum_{\ell =\underline{\ell}}^{\overline{\ell}}\frac{(2\ell+1)}{[Nu][Nu']} \sum_{\tau=-[N(u\wedge u^{\prime })]+1}^{[N(u\wedge u^{\prime })]-1}\left(1-\frac{|\tau|}{N(u\wedge u^{\prime })}
\right)C_\ell(\tau)+o(1) \label{o-piccolo}\\
&= (u\wedge u')\frac{[Ns][Ns']}{[Nu][Nu']}\frac{1}{\sigma^2}\sum_{\ell =\underline{\ell}}^{\overline{\ell}} 2\pi (2\ell+1) f_\ell(0)+o(1) \to (u\wedge u')\frac{s}{u}\frac{s'}{u'}\,. \label{cov}
\end{align}
To prove \eqref{o-piccolo}, we show that for each $\delta>0$ there exists $N_\delta$ such that $R_N<\delta$ for any $N>N_\delta$, where $R_N$ is defined below. Fix $\delta>0$ and consider, for a suitable $M>0$ to be specified,
$$
\delta'=\frac{\delta}{4\sigma^2M}, \qquad  \delta''=\frac{\delta}{4\sigma^2}\,.
$$
Without loss of generality, assume $u'<u$ and that $u'>\delta'>0$; then we have
\begin{align*}
R_N&:=\frac{2}{\sigma^2}\sum_{\ell =\underline{\ell}}^{\overline{\ell}} \frac{2\ell+1}N \sum_{t'=1}^{[Nu']} \sum_{t=[Nu']+1}^{[Nu]} C_\ell(t-t')\\
&=\frac{2}{\sigma^2}\sum_{\ell =\underline{\ell}}^{\overline{\ell}} \frac1N \sum_{t'=1}^{[N(u'-\delta')]} \sum_{t=[Nu']+1}^{[Nu]}  C_\ell(t-t')
+\frac{2}{\sigma^2}\sum_{\ell =\underline{\ell}}^{\overline{\ell}} \frac1N \sum_{t'=[N(u'-\delta')]}^{[Nu']} \sum_{t=[Nu']+1}^{[Nu]}  C_\ell(t-t')\\
&=R_{1;N}+R_{2;N}\,.   
\end{align*}
We note that
\begin{flalign*}
\sum_{t=[Nu']+1}^{[Nu]} \sum_{\ell =\underline{\ell}}^{\overline{\ell}} (2\ell+1)C_\ell(t-t')\le\sum_{\tau=0}^{\infty} \sum_{\ell =\underline{\ell}}^{\overline{\ell}} (2\ell+1) \abs{ C_\ell(\tau)} =: M\,,
\end{flalign*}
and hence
$$
R_{2;N}\le\frac{2}{\sigma^2} \frac1N \sum_{t'=[N(u'-\delta')]}^{[Nu']}M= \frac{2}{\sigma^2} M\,\delta'=\frac{\delta}2\,.
$$
Regarding the first term, we have
$$
R_{1;N}\le\frac{2}{\sigma^2}\sum_{\tau=[N\delta']}^{+\infty} \sum_{\ell =\underline{\ell}}^{\overline{\ell}}  (2\ell+1)\, C_\ell(\tau),
$$
and since $ \displaystyle\sum_{\tau=[N\delta']}^{+\infty} \sum_{\ell =\underline{\ell}}^{\overline{\ell}}  (2\ell+1)\, C_\ell(\tau)$ represents the tail of a convergent series, there exists $N_{\delta''}=N_\delta$ such that
$$
\sum_{\tau=[N\delta']}^{+\infty} \sum_{\ell =\underline{\ell}}^{\overline{\ell}}  (2\ell+1)\, C_\ell(\tau) <  \delta'',
$$
for any $N>N_{\delta}$, and hence
$$
R_{1;N}<\frac{2}{\sigma^2} \delta''=\frac{\delta}2 \ .
$$
We just proved that for any $\delta>0$, there exists $N_{\delta}$ such that
$$
R_N<\delta, \qquad \text{for any } N>N_{\delta}\,,
$$
and hence $R_N=o(1)$. Finally, applying \eqref{cov}, we have
\[
\mathbb{E}[A_{1;N}(s)A_{1;N}(s^{\prime })]
\]
\begin{align*}
&=\frac{[Ns][Ns']}{\sigma^2 N}\sum_{\ell =\underline{\ell}}^{\overline{\ell}}\sum_{m=-\ell }^{\ell } \E{\bar a_{\ell m}(Ns)a_{\ell m}(Ns')+\bar a_{\ell m}(N)^2-\bar a_{\ell m}(Ns)\bar a_{\ell m}(N)-\bar a_{\ell m}(Ns')\bar a_{\ell m}(N)}\\
&\to(s\wedge s')+ss'-ss'-ss'=(s\wedge s')-ss'.
\end{align*}

\paragraph{The central limit theorem.}

To conclude the proof of the central limit theorem we only need to check that all joint cumulants of $A_{1;N}(s)$ of order strictly larger than two converge to zero. Recall that
\begin{equation*}
\bar a_{\ell m}(Nu):=\frac1{[Nu]}\sum_{t=1}^{[Nu]}a_{\ell m}(t)\,, \qquad u\in[0,1]\,.
\end{equation*}
Because the sum in \eqref{eq:alm-bar} is finite, it is enough to exploit a multivariate central limit theorem for the vector
$$
\sqrt{N}\left(\bar a_{\underline{\ell}(-\underline{\ell})}(Ns_{1,1}),\dots, \bar a_{\underline{\ell}\,\underline{\ell}}(Ns_{1,2\underline{\ell}+1}),\dots,\bar a_{\overline{\ell}(-\overline{\ell})}(Ns_{L,1}),\dots,\bar a_{\overline{\ell}\,\overline{\ell}}(Ns_{L,2\overline{\ell}+1})\right)\,,
$$
for any $(s_{1,1},\dots,s_{1,2\underline{\ell}+1},\dots,s_{L,1},\dots,s_{L,2\overline{\ell}+1})$, where $L=\underline{\ell}+\overline{\ell}-1$.

Let us denote $X_1, \dots, X_p$ any $p >2$ elements of this vector with possible repetitions. In view of Assumption \ref{assumption:cum}, we have immediately that
\begin{align*}
&\abs{\operatorname{cum}(X_1, \dots, X_p)}\\
&\le \frac{N^{p/2}}{[Ns_1]\cdots [Ns_p]}\sum_{t_1=1}^{[Ns_1]}\cdots\sum_{t_p=1}^{[Ns_p]}\sum_{\ell_1,\dots,\ell_p}\sum_{m_1,\dots,m_p}\abs{\operatorname{cum}\left(a_{\ell_1 m_1}(t_1),\dots, a_{\ell_p m_p}(t_p)\right)}\\
&=\frac{N^{p/2}}{[Ns_1]\cdots [Ns_{p-1}]}\sum_{\tau_1,\dots,\tau_{p-1}=-\infty}^{\infty}\sum_{\ell_1,\dots,\ell_p}\sum_{m_1,\dots,m_p}\abs{\operatorname{cum}\left(a_{\ell_1 m_1}(\tau_1),\dots,a_{\ell_{p-1} m_{p-1}}(\tau_{p-1}), a_{\ell_p m_p}(0)\right)}\\
&= O(N^{1-p/2}),
\end{align*}
and convergence of the finite-dimensional distributions is hence established. To conclude the proof of weak convergence, we need to establish a tightness condition; this goal is pursued in the following section.

\subsection{Tightness of $A_N(s)$}

We exploit a classical tightness criterion for sequences of processes in $D[0,1]$, as given for instance in \cite{wichura}; we need to prove that there exist $\beta>1$, $\gamma>0$ such that
\begin{equation}\label{cond-tight}
\E{\abs{ A_{1;N}(B_1)}^{\gamma_1}\abs{ A_{1;N}(B_2)}^{\gamma_2}}\le C\, \left (\nu\left (B_1 \cup B_2\right )\right )^\beta\,, \qquad \gamma_1+\gamma_2=\gamma,
\end{equation}
where 
$$
B_1= (s_1,s]\,,      \qquad B_2=(s,s_2]
$$
and $A_{1;N}(B)$ is the increment of the functional $A_{1;N}$ on the interval $B=(s_1,s_2]$ and it is defined as
$$
A_{1;N}(B)=A_{1;N}(s_2)-A_{1;N}(s_1)\,.
$$
We choose $\gamma_1=\gamma_2=2$, to have
\begin{align*}
&\E{\abs{ A_{1;N}(B_1)}^{2}\abs{ A_{1;N}(B_2)}^{2}}\\
&=\E{\abs{\frac{1}{\sigma\sqrt{N}}\sum_{t_1=[Ns_1]+1}^{[Ns]}\sum_{\ell_1 =\underline{\ell}}^{\overline{\ell}}
\sum_{m_1=-\ell_1 }^{\ell_1} a _{\ell_1 m_1}(t_1)}^2\abs{\frac{1}{\sigma\sqrt{N}}\sum_{t_2=[Ns]+1}^{[Ns_2]}\sum_{\ell_2 =\underline{\ell}}^{\overline{\ell}}
\sum_{m_2=-\ell_2}^{\ell_2} a _{\ell_2 m_2}(t_2)}^2}\\
&=\frac{1}{\sigma^4N^2}\sum_{t_1,t_2=[Ns_1]+1}^{[Ns]}\sum_{t_3,t_4=[Ns]+1}^{[Ns_2]}\sum_{\ell_1,\ell_2 =\underline{\ell}}^{\overline{\ell}}
\sum_{m_1=-\ell_1 }^{\ell_1 }\sum_{m_2=-\ell_2 }^{\ell_2 } \E{a _{\ell_1 m_1}(t_1)a _{\ell_1 m_1}(t_2)a _{\ell_2 m_2}(t_3)a _{\ell_2 m_2}(t_4)}.
\end{align*}
Now, we observe that
\begin{align*}
&\E{a _{\ell_1 m_1}(t_1)a _{\ell_1 m_1}(t_2)a _{\ell_2 m_2}(t_3)a _{\ell_2 m_2}(t_4)}\\
&=\E{a _{\ell_1 m_1}(t_1)a _{\ell_1 m_1}(t_2)}\E{a _{\ell_2 m_2}(t_3)a _{\ell_2 m_2}(t_4)}+\E{a _{\ell_1 m_1}(t_1)a _{\ell_2 m_2}(t_3)}\E{a _{\ell_1 m_1}(t_2)a _{\ell_2 m_2}(t_4)}\\
&+\E{a _{\ell_1 m_1}(t_1)a _{\ell_2 m_2}(t_4)}\E{a _{\ell_1 m_1}(t_2)a _{\ell_2 m_2}(t_3)}+\operatorname{cum}\big(a _{\ell_1 m_1}(t_1),a _{\ell_1 m_1}(t_2),a _{\ell_2 m_2}(t_3),a _{\ell_2 m_2}(t_4)\big)\\
&=C_{\ell_1} (t_2-t_1)C_{\ell_2}(t_4-t_3)+C_{\ell_1} (t_3-t_1)C_{\ell_1}(t_4-t_2)\1_{\ell_1=\ell_2,m_1=m_2}\\
&+C_{\ell_1} (t_4-t_1)C_{\ell_1}(t_3-t_2)\1_{\ell_1=\ell_2,m_1=m_2}+\operatorname{cum}\big(a _{\ell_1 m_1}(t_1),a _{\ell_1 m_1}(t_2),a _{\ell_2 m_2}(t_3),a _{\ell_2 m_2}(t_4)\big)
\end{align*}
and by stationarity we have that
\begin{align*}
&\operatorname{cum}\big(a _{\ell_1 m_1}(t_1),a _{\ell_1 m_1}(t_2),a _{\ell_2 m_2}(t_3),a _{\ell_2 m_2}(t_4)\big)\\
&= \operatorname{cum}\big(a _{\ell_1 m_1}(0),a _{\ell_1 m_1}(t_2-t_1),a _{\ell_2 m_2}(t_3-t_1),a _{\ell_2 m_2}(t_4-t_1)\big).
\end{align*}
Hence, 
\begin{align*}
&\E{\abs{ A_{1;N}(B_1)}^{2}\abs{ A_{1;N}(B_2)}^{2}}\\
&\le \frac{1}{\sigma^4N^2} \left(\sum_{\ell=\underline{\ell}}^{\overline{\ell}}(2\ell+1) \sum_{t_1,t_2=[Ns_1]+1}^{[Ns_2]}\, \abs{C _{\ell}(t_2-t_1)} \right)^2\\
 & +\frac{2}{\sigma^4N^2} \sum_{\ell =\underline{\ell}}^{\overline{\ell}}
 (2\ell+1) \left ( \sum_{t_1,t_2=[Ns_1]+1}^{[Ns_2]} \abs{C _{\ell}(t_2-t_1)} \right)^2\\
 & +\frac{1}{\sigma^4N^2}\sum_{t_1,t_2,t_3,t_4=[Ns_1]+1}^{[Ns_2]}\sum_{\ell_1,\ell_2 =\underline{\ell}}^{\overline{\ell}}
\sum_{m_1=-\ell_1 }^{\ell_1 }\sum_{m_2=-\ell_2 }^{\ell_2 }\, \abs{\operatorname{cum}\big(a _{\ell_1 m_1}(0),a _{\ell_1 m_1}(t_2-t_1),a _{\ell_2 m_2}(t_3-t_1),a _{\ell_2 m_2}(t_4-t_1)\big)}\,.
\end{align*}
In particular, in view of Assumption \ref{assumption:cum}, for the last term we get
\begin{align*}
&\sum_{t_1,t_2,t_3,t_4=[Ns_1]+1}^{[Ns_2]}\sum_{\ell_1,\ell_2 =\underline{\ell}}^{\overline{\ell}}
\sum_{m_1=-\ell_1 }^{\ell_1 }\sum_{m_2=-\ell_2 }^{\ell_2 }\,\abs{\operatorname{cum}\big(a _{\ell_1 m_1}(0),a _{\ell_1 m_1}(t_2-t_1),a _{\ell_2 m_2}(t_3-t_1),a _{\ell_2 m_2}(t_4-t_1)\big)}\\
&\le ([Ns_2]-[Ns_1])\sum_{\tau_1,\tau_2,\tau_3=-\infty}^{\infty}\sum_{\ell_1,\ell_2 =\underline{\ell}}^{\overline{\ell}}
\sum_{m_1=-\ell_1 }^{\ell_1 }\sum_{m_2=-\ell_2 }^{\ell_2 }\,\abs{\operatorname{cum}\big(a _{\ell_1 m_1}(0),a _{\ell_1 m_1}(\tau_1),a _{\ell_2 m_2}(\tau_2),a _{\ell_2 m_2}(\tau_3)\big)}\\
&\le ([Ns_2]-[Ns_1])^2\sum_{\ell_1,\ell_2 =\underline{\ell}}^{\overline{\ell}}
\sum_{m_1=-\ell_1 }^{\ell_1 }\sum_{m_2=-\ell_2 }^{\ell_2 }\sum_{\tau_1,\tau_2,\tau_3=-\infty}^{\infty}\,\abs{\operatorname{cum}\big(a _{\ell_1 m_1}(0),a _{\ell_1 m_1}(\tau_1),a _{\ell_2 m_2}(\tau_2),a _{\ell_2 m_2}(\tau_3)\big)}\,,
\end{align*}
where the last inequality follows from the fact that $[Ns_2]-[Ns_1]$ is an entire number and hence $[Ns_2]-[Ns_1]\le ([Ns_2]-[Ns_1])^2$.
Then, since the sum over multipoles is finite, we obtain
\begin{align*}
&\sum_{t_1,t_2,t_3,t_4=[Ns_1]+1}^{[Ns_2]}\sum_{\ell_1,\ell_2 =\underline{\ell}}^{\overline{\ell}}
\sum_{m_1=-\ell_1 }^{\ell_1 }\sum_{m_2=-\ell_2 }^{\ell_2 }\,\abs{\operatorname{cum}\big(a _{\ell_1 m_1}(0),a _{\ell_1 m_1}(t_2-t_1),a _{\ell_2 m_2}(t_3-t_1),a _{\ell_2 m_2}(t_4-t_1)\big)}\\
&\le M \,([Ns_2]-[Ns_1])^2 \le 4 \, M \, N^2 \,(s_2-s_1)^2 \,,
\end{align*}
for some $M>0$, using the same argument as in \cite[Theorem 14.1]{Billingsley}. The other terms in the sum can be dealt with similarly.


\section{Proof of Theorem \ref{thm-H1}}\label{proof:thm-H1}

We first note that, under the alternative hypothesis $H_1$,
\begin{flalign*}
\beta_{\ell m}(t)-\widehat\mu_{\ell m}=a_{\ell m }(t)-\frac 1N\sum_{u=1}^N a_{\ell m}(u)+\left (\mu _{\ell m}(t) -\frac 1N\sum_{u=1}^N\mu _{\ell m}(u)\right ),
\end{flalign*}
and hence 
\begin{flalign*}
A_N(s)&=\frac{1}{\widehat{\sigma}}\underbrace{\frac1{\sqrt{N}}\sum_{t=1}^{[Ns]} \sum_{\ell=\underline{\ell}}^{\overline{\ell}} \sum_{m=-\ell}^\ell \left(a_{\ell m }(t)-\frac 1N\sum_{u=1}^N a_{\ell m}(u)\right)}_{B_N(s)}\\
&\quad+\frac{1}{\widehat{\sigma}}\underbrace{\frac1{\sqrt{N}}\sum_{t=1}^{[Ns]} \sum_{\ell=\underline{\ell}}^{\overline{\ell}} \sum_{m=-\ell}^\ell \left (\mu _{\ell m}(t) -\frac 1N\sum_{u=1}^N\mu _{\ell m}(u)\right )}_{E_N(s)}\\
&=\frac{B_N(s)}{\widehat\sigma}+\frac{E_N(s)}{\widehat\sigma}\ .
\end{flalign*}
Now, as $N \to \infty$, 
\begin{flalign*}
B_N(s) \implies \sigma W(s)\,,
\end{flalign*}
as a result of Theorem \ref{thm-H0}. Moreover, 
\begin{align*}
E_N(s)&=\sqrt{N}\frac1{N}\sum_{t=1}^{[Ns]} \sum_{\ell=\underline{\ell}}^{\overline{\ell}} \sum_{m=-\ell}^\ell \left (\mu _{\ell m}(t) -\frac 1N\sum_{u=1}^N\mu _{\ell m}(u)\right )\\
&=\sqrt{N}\frac1{N}\sum_{t=1}^{[Ns]} \sum_{\ell=\underline{\ell}}^{\overline{\ell}} \sum_{m=-\ell}^\ell \left (N^{\alpha_\ell} g_{\ell m}\left(\frac t N\right) -\frac 1N\sum_{u=1}^NN^{\alpha_\ell} g_{\ell m}\left(\frac u N\right)\right )\\
&= N^{\overline{\alpha}+1/2}
\left(\sum_{\ell\in \overline{\mathcal{I}}}\sum_{m=-\ell}^\ell  \left (\int_0^s g_{\ell m}(t) dt - s\int_0^1 g _{\ell m}(u)du\right ) + o(1) \right),
\end{align*}
where $o(1)$ is uniform in $s$ (since we assume $g_{\ell m}$ to be piecewise continuous and bounded on [0,1], see e.g.~\cite[Lemma 4]{KPA:88}). 
Note that, 
\begin{itemize}
    \item if $\overline{\alpha} <-1/2$, $N^{\overline{\alpha}+1/2}\to 0$, as $N\to \infty$; 
    \item if $\overline{\alpha} = -1/2$, $N^{\overline{\alpha}+1/2}=1 $;
    \item  if $\overline{\alpha} > -1/2$, $N^{\overline{\alpha}+1/2}\to \infty$, as $N\to \infty$.
\end{itemize}
As a consequence, the final result on the test statistic is as follows: for $\overline{\alpha} < -1/2$, the statistic converges weakly to the Brownian bridge $W$ (as  under $H_0$),
while if $\overline{\alpha} = -1/2$,  $A_N(s)$ converges weakly to a shifted Brownian bridge, i.e.,
$$
A_N(s) \Longrightarrow W(s) + \frac{1}{\sigma}
\sum_{\ell\in \overline{\mathcal{I}}}\sum_{m=-\ell}^\ell  \left (\int_0^s g_{\ell m}(t) dt - s\int_0^1 g _{\ell m}(u)du\right ).
$$

Case \emph{(II)} requires a more careful inspection. First assume $q_N N^{2\overline{\alpha}} \to c\ge0$; then, for fixed $s\in[0,1]$, we have
\begin{align*}
\frac{E_N(s)}{\widehat\sigma}&=N^{\overline{\alpha}+1/2} \frac{
\sum_{\ell\in \overline{\mathcal{I}}}\sum_{m=-\ell}^\ell  \left (\int_0^s g_{\ell m}(t) dt - s\int_0^1 g _{\ell m}(u)du\right ) + o(1) }{\left(\sigma^2+q_N N^{2\overline{\alpha}} \sum_{\ell\in \overline{\mathcal{I}}}\sum_{m=-\ell}^\ell \Var(g_{\ell m}(U)+O_\Prob(q_N N^{\overline{\alpha}-1/2})\right)^{1/2}}\\
&=N^{\overline{\alpha}+1/2}\left(\frac{\sum_{\ell\in \overline{\mathcal{I}}}\sum_{m=-\ell}^\ell  \left (\int_0^s g_{\ell m}(t) dt - s\int_0^1 g _{\ell m}(u)du\right )}{\left(\sigma^2+c \sum_{\ell\in \overline{\mathcal{I}}}\sum_{m=-\ell}^\ell \Var(g_{\ell m}(U)\right)^{1/2}}+o_\Prob(1)\right).
\end{align*}
Now assume that $q_N N^{2\overline{\alpha}} \to +\infty$; then, for fixed $s\in[0,1]$, we have
\begin{align*}
\frac{E_N(s)}{\widehat\sigma}&=\frac{E_N(s)}{\sqrt{q_N} N^{\overline{\alpha}}}\frac{\sqrt{q_N} N^{\overline{\alpha}}}{\widehat\sigma} \\
&=\sqrt{\frac{N}{q_N}}\left(\frac{\sum_{\ell\in \overline{\mathcal{I}}}\sum_{m=-\ell}^\ell  \left (\int_0^s g_{\ell m}(t) dt - s\int_0^1 g _{\ell m}(u)du\right )}{\left(\sum_{\ell\in \overline{\mathcal{I}}}\sum_{m=-\ell}^\ell \Var(g_{\ell m}(U)\right)^{1/2}}+o_\Prob(1)\right)
\end{align*}
Note that when $\overline{\alpha} \ge 0$, necessarily $q_N N^{2\overline{\alpha}} \to +\infty$, while when $\overline{\alpha} \in (-1/2,0)$ the behavior of $q_N N^{2\overline{\alpha}}$ depends on the choice of $q_N$.

As a consequence, we just proved that for any $s\in[0,1]$
$$
A_N(s)=c(s) r_N(\overline{\alpha})+ o_\Prob\left( r_N(\overline{\alpha})\right)\,,
$$
with
\begin{equation}\label{eq:cs}
c(s) =\begin{dcases}\frac{\sum_{\ell\in \overline{\mathcal{I}}}\sum_{m=-\ell}^\ell  \left (\int_0^s g_{\ell m}(t) dt - s\int_0^1 g _{\ell m}(u)du\right )}{\left(\sigma^2+c \sum_{\ell\in \overline{\mathcal{I}}}\sum_{m=-\ell}^\ell \Var(g_{\ell m}(U)\right)^{1/2}} &\text{when } q_N N^{2\overline{\alpha}} \to c \ge 0  \\
 \frac{\sum_{\ell\in \overline{\mathcal{I}}}\sum_{m=-\ell}^\ell  \left (\int_0^s g_{\ell m}(t) dt - s\int_0^1 g _{\ell m}(u)du\right )}{\left(\sum_{\ell\in \overline{\mathcal{I}}}\sum_{m=-\ell}^\ell \Var(g_{\ell m}(U)\right)^{1/2}} &\text{when } q_N N^{2\overline{\alpha}} \to +\infty
\end{dcases},
\end{equation}
and hence
\begin{equation}\label{eq:cs-prob}
\frac{\abs{A_N(s)}}{r_N(\overline{\alpha})}-\abs{c(s)} \to 0 \quad \text{in probability} \quad \text{as} \quad N\to \infty.
\end{equation}
Now we choose $s^\star \in [0,1]$ such that $c(s^\star)\ne 0$ and we exploit the fact that
\begin{flalign*}
&\Prob\left(\sup_s\abs{A_N(s)}> K\cdot r_N(\overline{\alpha})\right)\ge \Prob\bigg(\abs{A_N(s^\star)}>  K \cdot r_N(\overline{\alpha})\bigg)\,.
\end{flalign*}
Since \eqref{eq:cs-prob}, for any $\eps>0$,
$$
\Prob \left( \abs{\frac{\abs{A_N(s^\star)}}{r_N(\overline{\alpha})}-\abs{c(s^\star)}}\le \eps\right)\to 1\,,
$$
as $N\to\infty$; moreover,
\begin{flalign*}
&\Prob \left( \abs{\frac{\abs{A_N(s^\star)}}{r_N(\overline{\alpha})}-\abs{c(s^\star)}}\le \eps\right)=\Prob \left(\abs{c(s^\star)}-\eps \le \frac{\abs{A_N(s^\star)}}{r_N(\overline{\alpha})}\le \abs{c(s^\star)}+\eps\right)\\
&\le \Prob \left(\frac{\abs{A_N(s^\star)}}{r_N(\overline{\alpha})}\ge \abs{c(s^\star)}-\eps\right)= \Prob \bigg(\abs{A_N(s^\star)}\ge \left(\abs{c(s^\star)}-\eps\right)r_N(\overline{\alpha})\bigg)
\end{flalign*}
and the proof is completed choosing $K=\abs{c(s^\star)}/2$.

\section*{Acknowledgments} DM is grateful to the MUR Department of Excellence Programme \emph{MatModToV} and to Mur \emph{Grafia} for financial support. AV is supported by the co-financing of the European Union - FSE-REACT-EU, PON Research and Innovation 2014-2020, DM 1062/2021.

\bibliographystyle{plain}
\bibliography{biblio}

\end{document}